\documentclass[11pt,a4paper]{amsart}
\usepackage{amsmath,amsthm,amssymb,latexsym}
\usepackage{graphicx}

\newcommand{\bb}{{\mathbb{B}}}
\newcommand{\bd}{{\mathbb{D}}}
\newcommand{\bn}{{\mathbb{N}}}
\newcommand{\br}{{\mathbb{R}}}
\newcommand{\bp}{{\mathbb{P}}}
\newcommand{\bz}{{\mathbb{Z}}}
\newcommand{\bc}{{\mathbb{C}}}
\newcommand{\bo}{{\mathbb{O}}}

\newcommand{\cf}{{\mathcal{F}}}
\newcommand{\ch}{{\mathcal{H}}}
\newcommand{\cl}{{\mathcal{L}}}
\newcommand{\ce}{{\mathcal{E}}}
\newcommand{\cv}{{\mathcal{V}}}


\renewcommand{\a}{\alpha}
\renewcommand{\b}{\beta}
\renewcommand{\l}{\lambda}
\newcommand{\s}{\sigma}

\renewcommand{\d}{\delta}

\renewcommand{\o}{\omega}

\newcommand{\g}{\gamma}
\renewcommand{\gg}{\Gamma}
\newcommand{\eps}{\varepsilon}
\newcommand{\z}{\zeta}

\newcommand{\nt}{\noindent}

\newcommand{\ag}{A(\gg)\,}

\newcommand{\whe}{\widehat e}
\newcommand{\wte}{\widetilde e}

\DeclareMathOperator{\lc}{span}
\DeclareMathOperator{\ran}{rank}
\DeclareMathOperator{\diag}{diag}

\allowdisplaybreaks
\numberwithin{equation}{section}

\newtheorem{theorem}{Theorem}[section]

\newtheorem{corollary}[theorem]{Corollary}
\newtheorem{proposition}[theorem]{Proposition}

\theoremstyle{definition}
\newtheorem{definition}[theorem]{Definition}
\newtheorem{remark}[theorem]{Remark}

\newtheorem{example}[theorem]{Example}

\begin{document}

\title[Infinite graphs]
{Spectra of infinite graphs with tails}
\author[L. Golinskii]{L. Golinskii}

\address{Mathematics Division, Institute for Low Temperature Physics and
Engineering, 47 Lenin ave., Kharkov 61103, Ukraine}
\email{golinskii@ilt.kharkov.ua}

\date{\today}

\keywords{Infinite graphs; adjacency operator; spectrum; Jacobi matrices of finite rank;
Jost function}
\subjclass[2010]{Primary: 05C63; Secondary: 05C76, 47B36, 47B15, 47A10}

\maketitle

\begin{abstract}
We compute explicitly (modulo solutions of certain algebraic equations) the spectra of
infinite graphs obtained by attaching one or several infinite paths to some vertices of
certain finite graphs. The main result concerns a canonical form of the adjacency matrix
of such infinite graphs. A complete answer is given in the case when the number of
attached paths to each vertex is the same.
\end{abstract}

\section*{Introduction and preliminaries}
\label{s0}

\subsection{Graph theory}

We begin with rudiments of the graph theory. For the sake of simplicity we restrict ourselves
with simple, connected, undirected, finite or infinite (countable) weighted graphs, although
the main result holds for weighted multigraphs and graphs with loops as well.
We will label the vertex set $\cv(\gg)$ by positive integers $\bn=\{1,2,\ldots\}$,
$\{v\}_{v\in \cv}=\{j\}_{j=1}^\o$, $\o\le\infty$. The symbol $i\sim j$ means that the vertices
$i$ and $j$ are incident, i.e., $\{i,j\}$ belongs to the edge set $\ce(\gg)$.
A graph $\gg$ is weighted if a positive number $d_{ij}$ (weight) is assigned to each edge
$\{i,j\}\in\ce(\gg)$. In case $d_{ij}=1$ for all $i,j$, the graph is unweighted.

The degree (valency) of a vertex $v\in\cv(\gg)$ is a number $\g(v)$ of edges emanating from $v$.
A graph $\gg$ is said to be locally finite, if $\g(v)<\infty$ for all $v\in\cv(\gg)$, and
uniformly locally finite, if $\sup_{\cv}\g(v)<\infty$.

The spectral graph theory deals with the study of spectra and spectral properties of certain
matrices related to graphs (more precisely, operators generated by such matrices in the standard
basis $\{e_k\}_{k\in\bn}$ and acting in the corresponding Hilbert spaces $\bc^n$ or
$\ell^2=\ell^2(\bn)$). One of the most notable of them is the {\it adjacency matrix} $A(\gg)$
\begin{equation}\label{adjmat}
A(\gg)=\|a_{ij}\|_{ij=1}^\o, \quad
a_{ij}=\left\{
  \begin{array}{ll}
    d_{ij}, & \{i,j\}\in\ce(\gg); \\
    0, & \hbox{otherwise.}
  \end{array}
\right.
\end{equation}
The corresponding adjacency operator will be denoted by the same symbol. It acts as
\begin{equation}\label{adjop}
A(\gg)\,e_k=\sum_{j\sim k} a_{jk}\,e_j, \qquad k\in\bn.
\end{equation}
Clearly, $\ag$ is a symmetric, densely-defined linear operator, whose domain is the set of all finite
linear combinations of the basis vectors. The operator $A(\gg)$ is bounded and self-adjoint in $\ell^2$,
as long as the graph $\gg$ is uniformly locally finite.

Whereas the spectral theory of finite graphs is very well established (see, e.g.,
\cite{Bap, BrHae, Chung97, CDS80}), the corresponding theory for infinite graphs is in its infancy. 
We refer to \cite{M82, MoWo89, SiSz} for the basics of this theory. In contrast to the general
consideration in \cite{MoWo89}, our goal is to carry out a complete spectral analysis (canonical models for the adjacency operators and computation of the spectrum) for a class of infinite graphs which loosely speaking can be called ``finite graphs with tails attached to them''. To make the notion precise, we define first an operation
of coupling well known for finite graphs (see, e.g., \cite[Theorem 2.12]{CDS80}).

\begin{definition}\label{coupl}
Let $\gg_k$, $k=1,2$, be two weighted graphs with no common vertices, with the vertex sets and edge
sets $\cv(\gg_k)$ and $\ce(\gg_k)$, respectively, and let $v_k\in \cv(\gg_k)$. A weighted graph
$\gg=\gg_1+\gg_2$ will be called a {\it coupling by means of the bridge $\{v_1,v_2\}$ of weight} $d$ if
\begin{equation}\label{defcoup}
\cv(\gg)=\cv(\gg_1)\cup \cv(\gg_2), \qquad \ce(\gg)=\ce(\gg_1)\cup \ce(\gg_2)\cup \{v_1,v_2\}.
\end{equation}
So we join $\gg_2$ to $\gg_1$ by the new edge of weight $d$ between $v_2$ and $v_1$.
\end{definition}

If the graph $\gg_1$ is finite, $V(\gg_1)=\{1,2,\ldots,n\}$, and $V(\gg_2)=\{j\}_{j=1}^\o$,
we can with no loss of generality put $v_1=n$, $v_2=1$, so the adjacency matrix $A(\gg)$ can
be written as a block matrix
\begin{equation}\label{adjcoup}
A(\gg)=\begin{bmatrix}
A(\gg_1) & I_d \\
I_d^*& A(\gg_2) &
\end{bmatrix}, \qquad
I_d=\begin{bmatrix}
0 & 0 & 0 & \ldots \\
\vdots & \vdots & \vdots &  \\
0 & 0 & 0 & \ldots \\
d & 0 & 0 & \ldots
\end{bmatrix}.
\end{equation}
If $\gg_2=\bp_\infty(\{a_j\})$, the one-sided weighted infinite path, we can view the coupling
$\gg=\gg_1+\bp_\infty(\{a_j\})$ as a finite graph with the tail. This is exactly the class of graphs
we will be dealing with in the paper. Each such graph has a finite number of essential ramification
nodes (a vertex $v$ is an essential ramification node if $\g(v)\ge3$, see \cite{Be09}).

\begin{picture}(300, 100)

\put(50, 50){\circle{80}} \put(44, 44){\Large{$\Gamma_1$}}

\multiput(72, 50) (50,0) {3} {\circle* {4}}
\multiput(74, 50) (50,0) {2} {\line(1,0) {46}}
\put(72, 56) {$N$} \put(104, 56) {$N+1$} \put(154, 56) {$N+2$}
\multiput(190, 50) (10, 0) {3} {\circle*{2}}

\end{picture}

The spectral theory of infinite graphs with one or several rays attached to certain finite graphs 
was initiated in \cite{Le-star, LeNi-dan, LeNi-umzh, Niz14} wherein a number of particular examples 
of unweighted (background) graphs is examined. We argue in the spirit of \cite{Br07, Br071, Si96} and 
suggest a quite general canonical form for the adjacency matrix of such graphs and supplement to the 
list of the examples. As a matter of fact, the algorithm applies not only to adjacency matrices, but 
to both Laplacians on graphs of such type. We also find a canonical form of the adjacency operator for 
graphs obtained from an arbitrary finite graph by attaching an equal number of infinite rays to 
{\it each} of its vertices.

\subsection{Jacobi matrices}

Under {\it Jacobi matrices} we mean here one-sided infinite matrices of the form
\begin{equation}\label{defjac}
J=J(\{b_j\}, \{a_j\})=
\begin{bmatrix}
 b_1 & a_1 & & \\
 a_1 & b_2 & a_2 & \\
     & a_2 & b_3 & \ddots & \\
     &     & \ddots & \ddots
\end{bmatrix}, \quad b_j\in\br, \quad a_j>0,
\end{equation}
which generate linear operators (called the Jacobi operators) on the Hilbert space $\ell^2(\bn)$.
The matrix
\begin{equation}\label{free}
J_0:=
\begin{bmatrix}
 0 & 1 & 0 & 0 & \\
 1 & 0 & 1 & 0 & \\
 0 & 1 & 0 & 1 & \\
     & \ddots  & \ddots & \ddots & \ddots
\end{bmatrix}
\end{equation}
called a {\it discrete Laplacian} or a {\it free Jacobi matrix}, is of particular interest in 
the sequel.

Given two Jacobi matrices $J_k=J(\{\b_j^{(k)}\}, \{\a_j^{(k)}\})$, $k=1,2$, the matrix $J_2$
is called a {\it truncation} of $J_1$ (and $J_1$ is an {\it extension} of $J_2$) if
\begin{equation*}
\b_j^{(2)}=\b_{j+q}^{(1)}, \quad \a_j^{(2)}=\a_{j+q}^{(1)}, \qquad j\in\bn,
\end{equation*}
for some $q\in\bn$. In other words, $J_2$ is obtained from $J_1$ by deleting the first $q$
rows and columns. If $J_2=J_0$, $J_1$ is said to be a {\it Jacobi matrix of finite rank} or an
{\it eventually free Jacobi matrix}.

For the class of Jacobi matrices of finite rank the complete spectral analysis is available at
the moment (see \cite{DaSi06, Ko14}). A basic in perturbation theory object known as the
{\it perturbation determinant} \cite{GK69} plays a key role. Given bounded linear operators $T_0$ 
and $T$ on the Hilbert space such that $T-T_0$ is a nuclear
operator, the perturbation determinant is defined by
\begin{equation}\label{perdet}
L(\l;T,T_0):=\det(I+(T-T_0)R(\l,T_0)), \qquad R(\l,T_0):=(T_0-\l)^{-1}
\end{equation}
is the resolvent of operator $T_0$, an analytic operator-function on the resolvent set $\rho(T_0)$.

The perturbation determinant is designed for the spectral analysis of the perturbed operator $T$, once
the spectral analysis for $T_0$ is available. In particular, the essential spectra of $T$ and $T_0$
agree, and the discrete spectrum of $T$ is exactly the zero set of the analytic function $L$ on $\rho(T_0)$,
at least if the latter is a domain, i.e., a connected, open set in the complex plane.

In the simplest case $\ran(T-T_0)<\infty$ the perturbation determinant is the standard finite
dimensional determinant. Indeed, now
$$ (T-T_0)h=\sum_{k=1}^p \langle h, \varphi_k\rangle\,\psi_k, \quad
(T-T_0)R(\l,T_0)h=\sum_{k=1}^p \langle h, R^*(\l,T_0)\varphi_k\rangle\,\psi_k,
$$
so $L$ can be computed by the formula
\begin{equation}\label{compd}
L(\l; T,T_0)=\det\|\delta_{ij}+\langle R(\l,T_0)\psi_i,\varphi_j\rangle\|_{i,j=1}^p.
\end{equation}

Our particular concern is $T_0=J_0$, the free Jacobi matrix. Its resolvent matrix in the
standard basis in $\ell^2$ is given by
\begin{equation}\label{resfree}
R\Bigl(z+\frac1{z}\,, \,J_0\Bigr)=\|r_{ij}(z)\|_{i,j=1}^\infty, \quad
r_{ij}(z)=\frac{z^{|i-j|}-z^{i+j}}{z-z^{-1}}\,,  \quad z\in\bd,
\end{equation}
see, e.g., \cite{KiSi03}. If $T=J$ is a Jacobi matrix of finite rank $p$, we end up with the computation
of the ordinary determinant \eqref{compd} of order $p$.

It is instructive for the further usage to compute two simplest perturbation determinants for
$\ran\,(J-J_0)=1$ and $2$.

\begin{example}\label{pd1}
Let
$$ J=J(\{b_j\}, \{1\}): \quad b_j=0, \quad j\not=q,
$$
so $J-J_0=\langle\cdot,e_q\rangle\,b_q e_q$. Hence by \eqref{resfree} and \eqref{compd}
\begin{equation}\label{rank1}
\widehat L(z) :=L\Bigl(z+\frac1{z}\,; J,J_0\Bigr)=1+b_q r_{qq}(z)
=1-b_q z\,\frac{z^{2q}-1}{z^2-1}\,.
\end{equation}

Similarly, let
$$ J=J(\{0\}, \{a_j\}): \quad a_j=1, \quad j\not=q,
$$
so $J-J_0=\langle\cdot,e_q\rangle\,(a_q-1)\,e_{q+1}+\langle\cdot,e_{q+1}\rangle\,(a_q-1)\,e_{q}$.
Hence by \eqref{resfree} and \eqref{compd}
\begin{equation}\label{rank2}
\begin{split}
\widehat L(z) &=\begin{vmatrix}
1+(a_q-1)\, r_{q,q+1}(z) & (a_q-1)\, r_{qq}(z) \\
(a_q-1)\, r_{q+1,q+1}(z) & 1+(a_q-1)\, r_{q+1,q}(z)
\end{vmatrix}
\\
&=1+(1-a_q^2) z^2\,\frac{z^{2q}-1}{z^2-1}\,.
\end{split}
\end{equation}
\end{example}

\medskip

In the Jacobi matrices setting there is yet another way of computing perturbation determinants
based on the so-called Jost solution and Jost function (see, e.g., \cite[Section 3.7]{SiSz}).
Consider the basic recurrence relation for the Jacobi matrix $J$
\begin{equation}\label{3term}
a_{n-1}y_{n-1}+b_ny_n+a_ny_{n+1}=\Bigl(z+\frac1{z}\Bigr)\,y_n,  \quad z\in\bd, \quad n\in\bn,
\end{equation}
where we put $a_0=1$. Its solution $y_n=u_n(z)$ is called the {\it Jost solution} if
\begin{equation}\label{js1}
\lim_{n\to\infty} z^{-n}u_n(z)=1, \qquad z\in\bd.
\end{equation}
The function $u=u_0$ in this case is called the {\it Jost function}.

The Jost solution definitely exists for finite rank Jacobi matrices. Indeed, let
$$ a_{q+1}=a_{q+2}=\ldots=1, \qquad b_{q+1}=b_{q+2}=\ldots=0. $$
One can put $u_k(z)=z^k$, $k=q+1,q+2,\ldots$ and then determine $u_q, u_{q-1},\ldots,u_0$
consecutively from \eqref{3term}. So,
\begin{equation}\label{js2}
\begin{split}
a_q\,u_q(z) &=z^q, \\
a_{q-1}a_q\,u_{q-1}(z) &=\a_q\,z^{q+1}-b_qz^q+z^{q-1}, \quad \a_q:=1-a_q^2
\end{split}
\end{equation}
etc., and in general
$$ u_{q-k}(z)=\sum_{j=-k}^k \b_{q,j}z^{q+j}, \qquad k=0,1,\ldots,q, \quad \b_{q,j}\in\br, \ \ \b_{q,-k}=1. $$
In particular, for $q=1$
\begin{equation}\label{jf1}
a_1\,u(z)=\a_1\,z^2-b_1z+1,
\end{equation}
and for $q=2$
\begin{equation}\label{jf2}
a_1a_2\,u(z) =\a_2\,z^4-(b_2+b_1\a_2)\,z^3 +(\a_1+\a_2+b_1b_2)\,z^2 -(b_1+b_2)\,z+1.
\end{equation}

The relation between the perturbation determinant and the Jost function is given by
\begin{equation}\label{pdfj}
u(z)=\prod_{j=1}^\infty a_j^{-1}\cdot \widehat L(z),
\end{equation}
see, e.g., \cite{KiSi03}, and such recursive way of computing perturbation determinants is sometimes far
easier than computing ordinary determinants \eqref{compd}, especially for large enough ranks of perturbation.

\begin{example}\label{jfcomp}
Let $J=J(\{b_j\}, \{a_j\})$ be a Jacobi matrix such that
$$ b_j=0, \quad j\not=1, \qquad a_j=1, \quad j\not=q. $$
We have $u_{q+j}(z)=z^{q+j}$, $j=1,2\ldots$,
\begin{equation*}
\begin{split}
a_q u_q(z) &=z^q, \quad a_qu_{q-1}(z) =\a_q\,z^{q+1}+z^{q-1}, \\
a_qu_{q-2} &=\a_q\,(z^{q+2}+z^q)+z^{q-2},
\end{split}
\end{equation*}
and, by induction,
\begin{equation}\label{jf21}
a_qu_{q-k}(z)=\a_q\,z^{q-k+2}\,\frac{z^{2k}-1}{z^2-1}\,,\qquad k=1,2,\ldots,q-1.
\end{equation}

Next, for $q=1$ we have exactly \eqref{jf1}, so let $q\ge2$.
The recurrence relation \eqref{3term} with $n=1$ gives
$$ a_q u(z)+b_1a_q u_1(z)+a_q u_{2}(z)=\Bigl(z+\frac1{z}\Bigr)\,a_qu_1(z), $$
and so we come to the following expression for the Jost function
\begin{equation}\label{jf3}
a_qu(z)(z^2-1)=\a_q\,(z-b_1)\,z^{2q+1}-b_1a_q^2 z^3+a_q^2 z^2+b_1 z-1.
\end{equation}

Similarly, for the Jacobi matrix $J=J(\{b_j\}, \{a_j\})$ with
$$ b_j=0, \ \  j\not=q; \quad a_j=1, \quad j\not=1 $$
one has
\begin{equation}\label{jf4}
a_1u(z)=-b_q\,\frac{z^{2q+1}+\a_1z^{2q-1}-\a_1z^3-z}{z^2-1}+\a_1z^2+1.
\end{equation}
For the Jacobi matrix $J=J(\{0\}, \{a_j\})$ with $a_j=1$, $j\not=1$, $q$, the Jost function 
is given by
\begin{equation}\label{jf5}
a_1a_q u(z)=\a_q\,\frac{z^{2q+2}+\a_1z^{2q}-\a_1z^4-z^2}{z^2-1}+ \a_1z^2+1.
\end{equation}
\end{example}

\medskip

The spectral theorem for finite rank Jacobi matrices provides a complete description of the spectral
measure of $J$ \cite{DaSi06}.

{\bf Theorem DS}. Let $J=J(\{b_j\}, \{a_j\})$ be a Jacobi matrix of finite rank
$$ a_{q+1}=a_{q+2}=\ldots=1, \qquad  b_{q+1}=b_{q+2}=\ldots=0, $$
and $u=u_0(J)$ be its Jost function. Then
\begin{itemize}
  \item $u$ is a real polynomial of degree $\deg u\le 2q$ (the Jost polynomial), 
  $\deg u=2q$ if and only if $a_q\not=1$.
  \item All roots of $u$ in the unit disk $\bd$ are real and simple, $u(0)\not=0$. A number $\l_j$
  is an eigenvalue of $J$ if and only if
\begin{equation}\label{zhuk}
\l_j=z_j+\frac1{z_j}, \qquad z_j\in(-1,1), \quad u(z_j)=0.
\end{equation}
  \item The spectral measure $\s(J)$ is of the form
\begin{equation}
\s(J,dx)=\s_{ac}(J,dx)+\s_d(J,dx)=w(x)\,dx+\sum_{j=1}^N \s_j\d(\l_j),
\end{equation}
where
$$ w(x):=\frac{\sqrt{4-x^2}}{2\pi|u(e^{it})|^2}\,, \quad x=2\cos t, \quad \s_j=\frac{z_j(1-z_j^{-2})^2}{u'(z_j)u(1/z_j)}\,. $$
\end{itemize}
Note that $|u(e^{it})|^2=Q(x)$, $x=2\cos t$, $Q$ is a real polynomial of the same degree as the
Jost polynomial $u$.

\medskip

The Jacobi matrices arise in the spectral graph theory thanks to the relation for the adjacency matrix $A(\bp_\infty(\{a_j\}))$ of the weighted path
\begin{equation}\label{adjpathw}
A(\bp_\infty(\{a_j\})):=J(\{0\}, \{a_j\}).
\end{equation}
In case of the unweighted path we have
\begin{equation}\label{adjpath}
A(\bp_\infty)=J_0.
\end{equation}
Note that the adjacency matrix for the finite (unweighted) path $\bp_m$ with $m$ vertices is
the finite Jacobi matrix $J(\{0\}, \{1\})$ of order $m$. The spectrum of this matrix is well
known \cite[p. 9]{BrHae}
\begin{equation}\label{specpath}
\s(\bp_m)=\Bigl\{2\cos\frac{\pi j}{m+1}\Bigr\}_{j=1}^m.
\end{equation}

It follows from \eqref{adjcoup} that for an arbitrary finite weighted graph $G$
\begin{equation}
A(G+\bp_\infty(\{a_j\}))=
\begin{bmatrix}
A(G) & I_d \\
I_d^*& J(\{0\},\{a_j\}) &
\end{bmatrix}.
\end{equation}

\medskip

Sometimes two-sided Jacobi matrices
\begin{equation}\label{defjac2}
J=J(\{b_j\}_{j\in\bz}, \{a_j\}_{j\in\bz})=
\begin{bmatrix}
\ddots & \ddots & \ddots &  & \\
 & a_{-1} & b_0 & a_0 &  & \\
 & & a_0 & b_1 & a_1 & & \\
 & &  & a_1 & b_2 & a_2 & \\
 & &  & & \ddots & \ddots & \ddots
\end{bmatrix},
\end{equation}
$b_j\in\br$, $a_j>0$, which generate linear operators on the two-sided $\ell^2(\bz)$,
arise in the canonical models for certain infinite graphs. We say
that such matrix has a finite rank if there are integers $N^{\pm}$, $N^-<N^+$ such that
$$ b_n=0, \quad n\notin[N^-, N^+], \quad a_n=1, \quad n\notin[N^-, N^+-1]. $$
In this case one has a pair of Jost solutions $\{u_n^{\pm}\}$ of the basic recurrence relation
\eqref{3term} (with $n\in\bz$) so that
\begin{equation}\label{jostsol2}
u_n^+(z)=z^n, \quad n\ge N^+, \quad u_n^-(z)=z^{-n}, \quad n\le N^-.
\end{equation}
The Wronskian of two solutions $\{f_n(z)\}_{n\in\bz}$ and $\{g_n(z)\}_{n\in\bz}$ of \eqref{3term}
is defined as
\begin{equation}\label{wron}
[f,g]:=a_n\bigl(f_n(z)g_{n+1}(z)-f_{n+1}(z)g_n(z)\bigr)
\end{equation}
(the right hand side does not actually depend on $n$). We denote by
\begin{equation}\label{wronjost}
w(z):=[u^+(z),u^-(z)], \qquad z\in\bd
\end{equation}
the Wronskian of two Jost solutions of \eqref{3term}.

It is well known that the spectrum of a two-sided Jacobi matrix $J$ of the finite rank is
$$ \s(J)=[-2,2]\cup\s_d(J), \qquad \s_d(J)=\{\l_j\}_{j=1}^N, $$
and for the eigenvalues $\l_j$ the following relation (see, e.g., \cite[Theorem~10.4]{Tjo})
\begin{equation}\label{eigen2}
\exists \,h\in\ell^2(\bz): \ Jh=\Bigl(\z+\frac1{\z}\Bigr)h \,\Longleftrightarrow\,
w(\z)=0, \quad \z\in\bd,
\end{equation}
holds. So the eigenvalues of $J$ are exactly the Zhukovsky images of zeros of wronskian $w$ \eqref{wronjost} in
the unit disk.

\begin{example}\label{twojm1}
Let
\begin{equation}
J=J(\{0\}_{j\in\bz}, \{a_j\}_{j\in\bz}), \quad a_j=1, \ \ j\not=0.
\end{equation}
We now have $N^-=0$, $N^+=1$. A simple computation gives
$$ w(z)=\frac1{a_0z}-a_0z, $$
and so
$$ \s_d(J)=\pm\Bigl(a_0+\frac1{a_0}\Bigr), \qquad a_0>1, $$
and $\s_d(J)$ is empty for $0<a_0\le1$.
\end{example}

\begin{example}\label{twojm2}
Let
\begin{equation}
J=J(\{0\}_{j\in\bz}, \{a_j\}_{j\in\bz}), \quad a_j=1, \ \ j\not=\pm1.
\end{equation}
We now have $N^-=-1$, $N^+=2$. A simple computation gives
\begin{equation*}
\begin{split}
w(z)&=\a_1\a_{-1}z^3+\Bigl(\frac{\a_1}{a_{-1}}+\frac{\a_{-1}}{a_{1}}-\frac1{a_1a_{-1}} \Bigr)\,z
+\frac{1}{a_1a_{-1}z}, \\
\a_k &:=\frac1{a_k}-a_k, \quad k=\pm1,
\end{split}
\end{equation*}
and so to determine the discrete spectrum $\s_d(J)$ one has to analyze the roots of the
quadratic equation
\begin{equation}\label{quadr}
Q(y)=Ay^2-By+1=0, \quad A:=(a_{-1}^2-1)(a_1^2-1), \quad B=a_1^2+a_{-1}^2-1.
\end{equation}
Precisely, each root $y_0$ of this equation in $(-1,1)$ generates two symmetric eigenvalues
$$ \l_{\pm}:=\pm\Bigl(\sqrt{y_0}+\frac1{\sqrt{y_0}}\Bigr). $$
\end{example}

\subsection{Canonical form for adjacency matrices}

It is well known that each bounded and self-adjoint linear operator on a separable Hilbert
space is unitarily equivalent to an infinite orthogonal sum of Jacobi operators. We suggest
here a ``canonical'' form for certain infinite adjacency matrices and the algorithm of their
reducing to such form.

\begin{theorem}\label{algorithm}
Let $A$ be a block matrix in $\ell^2$,
\begin{equation}\label{adjmat}
A=\begin{bmatrix}
\mathcal{A} & I_d \\
I_d^*& J &
\end{bmatrix}, \qquad
I_d=\begin{bmatrix}
0 & 0 & 0 & \ldots \\
\vdots & \vdots & \vdots &  \\
0 & 0 & 0 & \ldots \\
d & 0 & 0 & \ldots
\end{bmatrix},
\end{equation}
where $\mathcal{A}=\|a_{ij}\|_{i,j=1}^n$ is a real symmetric matrix of order $N$,
$J=J(\{\beta_j\}, \{\alpha_j\})$ is a Jacobi matrix. Then $A$ can be reduced to the
block diagonal form
\begin{equation}\label{canform}
A\simeq \begin{bmatrix}
\widehat{\mathcal A} &  \\
  & \widehat J &
\end{bmatrix}
\end{equation}
where $\widehat{\mathcal{A}}$ is a real symmetric matrix of order $n-k$, for some
$1\le k\le n$, and the Jacobi matrix $\widehat J$ is the extension of $J$.
In other words, there is a unitary operator $U$ in $\ell^2$ such that
\begin{equation}
U^{-1}AU=\widehat{\mathcal{A}}\oplus \widehat J.
\end{equation}
\end{theorem}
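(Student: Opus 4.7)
The plan is to block-diagonalize $A$ by exploiting the fact that the bridge $I_d$ touches the finite block $\mathcal{A}$ only through the single coupling vertex $e_n$. First I would form the cyclic subspace of $\mathcal{A}$ generated by this vertex,
\[
\ch_c := \lc\{\mathcal{A}^j e_n : j=0,1,2,\ldots\} \subset \bc^n,
\]
and set $k:=\dim \ch_c$, so $1\le k\le n$. Since $\mathcal{A}=\mathcal{A}^*$, both $\ch_c$ and its orthogonal complement $\ch_c^\perp$ in $\bc^n$ are $\mathcal{A}$-invariant, and I would put $\widehat{\mathcal{A}}:=\mathcal{A}|_{\ch_c^\perp}$, a real symmetric operator on an $(n-k)$-dimensional space.

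Second, I would perform a Lanczos (three-term) orthogonalization of the Krylov sequence $e_n,\mathcal{A}e_n,\ldots,\mathcal{A}^{k-1}e_n$, fixing the sign at each step so that the resulting orthonormal basis $f_1=e_n,f_2,\ldots,f_k$ of $\ch_c$ satisfies
\[
\mathcal{A} f_j = \a'_{j-1} f_{j-1} + \b'_j f_j + \a'_j f_{j+1}, \qquad \a'_j>0, \quad \b'_j\in\br,
\]
with the convention $\a'_0 f_0 = 0$ and $\a'_k f_{k+1} = 0$. In other words, $\mathcal{A}|_{\ch_c}$ is represented in this basis by a finite Jacobi matrix $J'=J(\{\b'_j\}_{j=1}^k,\{\a'_j\}_{j=1}^{k-1})$. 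Non-degeneracy of the recurrence up to step $k$ is precisely the definition of $k$ as the cyclic dimension, while the freedom to flip signs in Gram--Schmidt takes care of the positivity $\a'_j>0$.

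Third, I would assemble the unitary $U$. Reverse the Lanczos basis by setting $g_j:=f_{k+1-j}$ for $j=1,\ldots,k$, so that $g_k=e_n$ sits next to the bridge, and then concatenate with the original tail basis $e_1,e_2,\ldots$ of the $J$-part. Because the bridge connects $e_n$ only with $e_1$, in the new basis $\{g_1,\ldots,g_k,e_1,e_2,\ldots\}$ the restriction of $A$ to $\ch_c\oplus\ell^2(\bn)$ is tridiagonal: the top-left $k\times k$ block is the reversal of $J'$, the $(k,k+1)$ entry equals $d$, and from index $k+1$ onward it coincides with $J$. Thus this part of $A$ is a one-sided Jacobi operator $\widehat{J}$ which is an extension of $J$ in the sense defined earlier. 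Prepending any orthonormal basis of $\ch_c^\perp$ yields a full orthonormal basis of $\bc^n\oplus\ell^2=\ch_c^\perp\oplus(\ch_c\oplus\ell^2)$; one checks at once that $\ch_c^\perp$ is $A$-invariant (not merely $\mathcal{A}$-invariant), since $I_d^*$ annihilates $\ch_c^\perp$. The corresponding unitary $U$ then satisfies $U^{-1}AU=\widehat{\mathcal{A}}\oplus\widehat{J}$.

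The main obstacle is essentially bookkeeping rather than analysis: one has to run the Lanczos procedure in the correct orientation so that the coupling vertex ends up adjacent to the head of the original tail (hence the reversal $g_j=f_{k+1-j}$), and one has to verify that none of the $\a'_j$ vanishes before step $k$ -- both points are automatic once $k$ is chosen as $\dim \ch_c$. No convergence or domain question arises, since the whole construction takes place in a finite-dimensional subspace and leaves the tail $J$ completely untouched.
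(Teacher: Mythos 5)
Your proof is correct and is essentially the same argument as the paper's: the paper's explicit construction of the vectors $\wte_{n-1},\wte_{n-2},\ldots$ with coefficients $y_k,z_k$ is precisely the Lanczos three-term recurrence on the cyclic subspace of $\mathcal{A}$ generated by the coupling vertex $e_n$, run by hand until it terminates. Your only (harmless) presentational difference is identifying $k=\dim\ch_c$ up front rather than discovering it when the residual vanishes.
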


\begin{corollary}
Given a finite weighted graph $G$, the adjacency operator of the coupling $\gg=G+\bp_\infty(\{a_j\})$
is unitarily equivalent to the orthogonal sum
\begin{equation}
U^{-1}A(\gg)U= F(\gg)\oplus J(\gg)
\end{equation}
of a finite dimensional operator $F(\gg)$ and a Jacobi operator $J(\gg)$ which is the extension of
$J(\{0\},\{a_j\})$. If $\bp$ is unweighted, $J(\gg)$ is of finite rank.
\end{corollary}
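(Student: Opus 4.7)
The plan is to apply Theorem~\ref{algorithm} directly to the block-matrix description of $A(\gg)$. By the coupling formula \eqref{adjcoup} with $\gg_1=G$ finite, $V(\gg_1)=\{1,\ldots,n\}$, and $\gg_2=\bp_\infty(\{a_j\})$, together with the identification $A(\bp_\infty(\{a_j\}))=J(\{0\},\{a_j\})$ from \eqref{adjpathw}, we have
\[
A(\gg)=\begin{bmatrix} A(G) & I_d \\ I_d^* & J(\{0\},\{a_j\}) \end{bmatrix},
\]
which is precisely the block form required by the theorem. The upper-left block $A(G)$ is a real symmetric matrix of order $n$ (the graph $G$ is finite, weighted, and undirected, so $d_{ij}=d_{ji}$), and the lower-right block is a Jacobi matrix in the sense of \eqref{defjac}. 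Thus all hypotheses of Theorem~\ref{algorithm} are satisfied.

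Invoking the theorem yields a unitary $U$ on $\ell^2$ with
\[
U^{-1}A(\gg)U=\widehat{\mathcal A}\oplus \widehat J,
\]
where $\widehat{\mathcal A}$ is a real symmetric matrix of order $n-k$ for some $1\le k\le n$, and $\widehat J$ is a Jacobi matrix extending $J(\{0\},\{a_j\})$. Setting $F(\gg):=\widehat{\mathcal A}$ and $J(\gg):=\widehat J$ delivers the first assertion of the corollary.

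For the unweighted case one has $a_j=1$ for every $j$, so $J(\{0\},\{a_j\})$ is exactly the free Jacobi matrix $J_0$ from \eqref{free}. By the definition of extension recalled in the preliminaries, $J(\gg)=\widehat J$ truncates to $J_0$ after deleting some finite number $q$ of its first rows and columns. This is precisely the definition of an eventually free (i.e., finite rank) Jacobi matrix, so $J(\gg)$ is of finite rank.

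No genuine obstacle arises: the entire analytical content sits in Theorem~\ref{algorithm}, and the present task reduces to matching notation (identifying $\mathcal A$ with $A(G)$ and $J$ with $J(\{0\},\{a_j\})$) and observing that an extension of $J_0$ coincides, by definition, with an eventually free Jacobi matrix.
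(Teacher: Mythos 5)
Your proposal is correct and is exactly the argument the paper intends: the corollary is stated as an immediate consequence of Theorem~\ref{algorithm}, obtained by identifying $\mathcal{A}$ with $A(G)$ and $J$ with $J(\{0\},\{a_j\})$ in the block form \eqref{adjcoup}, and the finite-rank claim in the unweighted case follows, as you say, because an extension of $J_0$ is by definition an eventually free Jacobi matrix.
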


We call $F(\gg)$ a {\it finite-dimensional component} of the coupling $\gg$ and $J(\gg)$ its
{\it Jacobi component}.

In particular, the spectrum of $\gg$ is
\begin{equation}\label{spect}
\s(\gg)=[-2,2]\cup\s_d(\gg), \qquad \s_d(\gg)=\{\l_j\}_{j=1}^\o, \ \ \o<\infty,
\end{equation}
is a union of eigenvalues of $F(\gg)$ and $J(\gg)$.  By Theorem DS, the spectral analysis of the
graphs with tails amounts thereby to finding their finite-dimensional and Jacobi components,
computing the Jost polynomial for $J(\gg)$ and solving two algebraic equations, the characteristic
equation for $F(\gg)$ and the Jost equation for $J(\gg)$.

\begin{remark}\label{multicoupl}
Given a finite graph $G$, one can attach $p\ge1$ copies of the infinite path $\bp_\infty$ to some vertex
$v\in\cv(G)$. Although the graph $\gg$ thus obtained is not exactly the coupling in the sense of
Definition \ref{coupl}, its adjacency operator acts similarly to one for the coupling. Indeed, it is not
hard to see that
\begin{equation}
A(\gg)=\begin{bmatrix}
A(G) & I_{\sqrt{p}} \\
I_{\sqrt{p}}^*& J_0 &
\end{bmatrix} \bigoplus\Bigl(\bigoplus_{i=1}^{p-1} J_0\Bigr).
\end{equation}
Hence Theorem \ref{algorithm} applies, and the spectral analysis of such graph can be accomplished.
\end{remark}

\medskip

The algebraic equations which encounter later on cannot in general be solved explicitly. We can only
determine how many roots (if any) they have in $(-1,1)$, by means of the following well-known result
(see, e.g., \cite[p.~41]{PoSz}).

{\bf Theorem (Descarte's rule)}. Let $a(x)=a_0x^n+\ldots+a_n$ be a real polynomial. Denote by $\mu(a)$
the number of its positive roots, and $\nu(a)$ the number of the sign changes in the sequence
$\{a_0,\ldots,a_n\}$ of its coefficients (the zero coefficients are not taken into account).
Then $\nu(a)-\mu(a)$ is a nonnegative even number.

{\bf Acknowledgement}. I thank Professor Yu. Samoilenko for drawing my attention to the spectral graph theory
and putting forward some particular problems related to infinite graphs with tails and their spectra.

\section{Proof of the main result}
\label{s1}

Before starting the proof we note that the algorithm of constructing a new basis suggested
below is even more important than the result itself. To determine the spectra of graphs in the
next Section we will have to carry it out by hand. The result of Theorem \ref{algorithm} only
guarantees that the procedure can be accomplished with enough paper and patience.

The algorithm applies in a number of situations beyond graphs with one tail (for instance, for
the chain of cycles, the ladder with missing rungs etc.). We will elaborate on this topic in our
forthcoming papers.

\medskip
\nt {\it Proof of Theorem \ref{algorithm}.}

We construct an orthogonal basis $\{\wte_k\}_{k\ge1}$ in $\ell^2$ so that the matrix of $A$ in this
basis has the canonical form \eqref{canform}. We take $\wte_k=e_k$, $k=n,n+1,\ldots$, so
\begin{equation}\label{recurr}
\begin{split}
A\wte_{n+1} &=d\wte_n+\b_1\wte_{n+1}+\a_1\wte_{n+2}, \\
A\wte_{n+k} &=\a_{k-1}\wte_{n+k-1}+\b_k\wte_{n+k}+\a_k\wte_{n+k+1}, \quad k=2,3,\ldots.
\end{split}
\end{equation}
We want to find an orthogonal matrix $B=\|b_{ij}\|_{i,j=1}^{n-1}$ (more precisely,
the matrix with orthogonal columns) such that
\begin{equation}\label{newbas}
\wte_k=\sum_{i=1}^{n-1}b_{ik}e_i, \qquad k=1,2,\ldots,n-1.
\end{equation}
The following notation will be convenient throughout the proof
\begin{equation}
w_{jk}:=\sum_{i=1}^{n-1} b_{ik}a_{ij}, \qquad j=1,2,\ldots,n, \quad k=1,2,\ldots,n-1.
\end{equation}

1. Put
$$ b_{i,n-1}:=a_{in}, \quad i=1,2,\ldots,n-1, \qquad \wte_{n-1}:=\sum_{i=1}^{n-1}a_{in}e_i, $$
and so
\begin{equation}
A\wte_n=Ae_n=\wte_{n-1}+a_{nn}\wte_n+d\wte_{n+1}.
\end{equation}
Next,
\begin{equation*}
\begin{split}
A\wte_{n-1} &= \sum_{i=1}^{n-1}a_{in}Ae_i=\sum_{i=1}^{n-1}a_{in}\sum_{j=1}^{n}a_{ij}e_j=
\sum_{j=1}^{n}u_{j,n-1}e_j \\
 &= \sum_{j=1}^{n-1} u_{j,n-1}e_j+\sum_{i=1}^{n-1}a_{in}^2\cdot \wte_n.
\end{split}
\end{equation*}
Note that $\sum_{i}a_{in}^2=\sum_i b_{i,n-1}^2>0$, for otherwise the original matrix already
has the canonical form. Define
\begin{equation}\label{step1}
y_1:=\frac{\sum_{j=1}^{n-1} w_{j,n-1}b_{j,n-1}}{\sum_{j=1}^{n-1} b_{j,n-1}^2}\,, \qquad b_{j,n-2}:=w_{j,n-1}-y_1b_{j,n-1}.
\end{equation}
Then
$$
A\wte_{n-1}=\wte_{n-2}+y_1\wte_{n-1}+z_1\wte_n, \quad z_1:=\sum_{i=1}^{n-1}b_{i,n-1}^2,
$$
and orthogonality of the columns
\begin{equation}\label{orth1}
\sum_{i=1}^{n-1} b_{i,n-2}b_{i,n-1}=0
\end{equation}
follows directly from the choice of $y_1$ \eqref{step1}.

If $\wte_{n-2}=0$, that is, $b_{i,n-2}=0$ for all $i=1,\ldots,n-1$, the algorithm terminates,
since
$$
A\wte_{n-1}=y_1\wte_{n-1}+z_1\wte_n,
$$
the subspace $\cl_{n-1}:=\lc\{\wte_j\}_{j\ge n-1}$ is invariant for $A$ and so is its orthogonal
complement $\cl_{n-1}^{\bot}=\ell^2\ominus\cl_{n-1}$, which is finite dimensional. Thus, we can
extend the basis $\{\wte_j\}_{j\ge n-1}$ in $\cl_{n-1}$ in an arbitrary way to the basis in the whole
$\ell^2$. The latter is exactly the basis we are looking for.

If $\wte_{n-2}\not=0$ we proceed to the next step.

2. We have
\begin{equation*}
\begin{split}
A\wte_{n-2} &= \sum_{i=1}^{n-1}b_{i,n-2}Ae_i=\sum_{i=1}^{n-1}b_{i,n-2}\sum_{j=1}^{n}a_{ij}e_j=
\sum_{j=1}^{n}w_{j,n-2}e_j \\
 &= \sum_{j=1}^{n-1} w_{j,n-1}e_j+w_{n,n-2}e_n=\sum_{j=1}^{n-1} w_{j,n-1}e_j
\end{split}
\end{equation*}
($w_{n,n-2}=0$ by \eqref{orth1}). Put
\begin{equation}\label{step2}
y_2:=\frac{\sum_{j=1}^{n-1} w_{j,n-2}b_{j,n-2}}{\sum_{j=1}^{n-1} b_{j,n-2}^2}\,, \quad
z_2:=\frac{\sum_{j=1}^{n-1} w_{j,n-2}b_{j,n-1}}{\sum_{j=1}^{n-1} b_{j,n-1}^2}
\end{equation}
and define
\begin{equation}\label{step2.1}
b_{j,n-3}:=w_{j,n-2}-y_2b_{j,n-2}-z_2b_{j,n-1}, \qquad j=1,2,\ldots,n-1,
\end{equation}
which leads to
\begin{equation}\label{step2.2}
A\wte_{n-2}=\wte_{n-3}+y_2\wte_{n-2}+z_2\wte_{n-1}.
\end{equation}
The orthogonality relations
\begin{equation}\label{orth2}
\sum_{i=1}^{n-1} b_{i,n-3}b_{i,n-2}=\sum_{i=1}^{n-1} b_{i,n-3}b_{i,n-1}=0
\end{equation}
stem from the definition of $y_2$,$z_2$ and \eqref{orth1}.

Again, if $\wte_{n-3}=0$, we are done. If not, we proceed to the next step.

3. As above we have in view of \eqref{orth2}
\begin{equation*}
A\wte_{n-3} = \sum_{j=1}^{n}w_{j,n-3}e_j = \sum_{j=1}^{n-1} w_{j,n-3}e_j.
\end{equation*}
Put
\begin{equation}\label{step3}
y_3:=\frac{\sum_{j=1}^{n-1} w_{j,n-3}b_{j,n-3}}{\sum_{j=1}^{n-1} b_{j,n-3}^2}\,, \quad
z_3:=\frac{\sum_{j=1}^{n-1} w_{j,n-3}b_{j,n-2}}{\sum_{j=1}^{n-1} b_{j,n-2}^2}
\end{equation}
and define
\begin{equation}\label{step2.1}
b_{j,n-4}:=w_{j,n-2}-y_2b_{j,n-2}-z_2b_{j,n-1}, \qquad j=1,2,\ldots,n-1,
\end{equation}
so
\begin{equation}\label{step2.2}
A\wte_{n-3}=\wte_{n-4}+y_3\wte_{n-3}+z_3\wte_{n-2}.
\end{equation}
As above, the orthogonality relations
\begin{equation}\label{orth2}
\sum_{i=1}^{n-1} b_{i,n-4}b_{i,n-3}=\sum_{i=1}^{n-1} b_{i,n-4}b_{i,n-2}=0
\end{equation}
stem from the definition of $y_3$,$z_3$ and \eqref{orth2}. As for the last one
we see from \eqref{step2.1} and \eqref{orth2} that
\begin{equation*}
\begin{split}
\sum_{i=1}^{n-1} b_{i,n-4}b_{i,n-1} &=\sum_{i=1}^{n-1} w_{i,n-3}b_{i,n-1}=
\sum_{i=1}^{n-1}\left(\sum_{j=1}^{n-1}b_{j,n-3}a_{ji}\right)b_{i,n-1} \\
&=\sum_{j=1}^{n-1}b_{j,n-3}\left(\sum_{i=1}^{n-1}b_{i,n-1}a_{ij}\right)=
\sum_{j=1}^{n-1}b_{j,n-3}w_{j,n-1}=0
\end{split}
\end{equation*}
since $w_{j,n-1}=b_{j,n-2}+y_1b_{j,n-1}$ (see \eqref{step1}).

k. As soon as the process has not terminated, the pairwise orthogonal columns
$\{b_{i,n-1}\}_{i=1}^{n-1}, \ldots \{b_{i,n-k}\}_{i=1}^{n-1}$ have been constructed,
\begin{equation}\label{orthk}
\sum_{i=1}^{n-1} b_{i,n-p}b_{i,n-q}=0, \qquad p\not=q, \quad p,q=1,2,\ldots,k,
\end{equation}
and the equalities
\begin{equation}\label{stepk}
b_{i,n-m}+y_{m-1}b_{i,n-m+1}+z_{m-1}b_{i,n-m+2}=w_{i,n-m+1}, \qquad m=2,\ldots,k, \quad z_1=0
\end{equation}
hold. Next, we have
\begin{equation*}
A\wte_{n-k} = \sum_{j=1}^{n}w_{j,n-k}e_j = \sum_{j=1}^{n-1} w_{j,n-k}e_j,
\end{equation*}
as $w_{n,n-k}=0$ in view of \eqref{orthk}. Again put
\begin{equation}\label{stepk.1}
y_k:=\frac{\sum_{j=1}^{n-1} w_{j,n-k}b_{j,n-k}}{\sum_{j=1}^{n-1} b_{j,n-k}^2}\,, \quad
z_k:=\frac{\sum_{j=1}^{n-1} w_{j,n-k}b_{j,n-k+1}}{\sum_{j=1}^{n-1} b_{j,n-k+1}^2}\,,
\end{equation}
and define
\begin{equation}\label{stepk.2}
b_{j,n-k-1}:=w_{j,n-k}-y_kb_{j,n-k}-z_kb_{j,n-k+1}, \qquad j=1,2,\ldots,n-1,
\end{equation}
so
\begin{equation}\label{step2.2}
A\wte_{n-k}=\wte_{n-k-1}+y_k\wte_{n-k}+z_k\wte_{n-k+1}.
\end{equation}

It remains only to verify the orthogonality conditions with the last column $\{b_{i,n-k-1}\}$.
The first two
\begin{equation*}
\sum_{i=1}^{n-1} b_{i,n-k-1}b_{i,n-k}=\sum_{i=1}^{n-1} b_{i,n-k-1}b_{i,n-k+1}=0
\end{equation*}
follow directly from the choice of $y_k,z_k$. For $m=1,2,\ldots,k-2$
\begin{equation*}
\begin{split}
\sum_{i=1}^{n-1} b_{i,n-k-1}b_{i,n-m} &=\sum_{i=1}^{n-1} w_{i,n-k}b_{i,n-m}=
\sum_{i=1}^{n-1}\left(\sum_{j=1}^{n-1}b_{j,n-k}a_{ji}\right)b_{i,n-m} \\
&=\sum_{j=1}^{n-1}b_{j,n-k}\left(\sum_{i=1}^{n-1}b_{i,n-m}a_{ij}\right)=
\sum_{j=1}^{n-1}b_{j,n-k}w_{j,n-m} \\
& = \sum_{j=1}^{n-1}b_{j,n-k}(b_{j,n-m-1}+y_{m-2}b_{j,n-m}+z_{m-2}b_{j,n-m+1})=0,
\end{split}
\end{equation*}
as claimed.

If $\wte_{n-k-1}=0$ then $A\wte_{n-k}=y_k\wte_{n-k}+z_k\wte_{n-k+1}$,
$\cl_{n-k}:=\lc\{\wte_j\}_{j\ge n-k}$ is $A$-invariant, and so is its orthogonal complement
which is finite dimensional. Otherwise the algorithm can be extended to yet another step.

When the algorithm continues till $\wte_1$, we come to the orthogonal set $\{\wte_j\}_{j=1}^{n-1}$
in $\lc\{e_j\}_{1\le j\le n-1}$, so necessarily $\wte_0=0$. In this case the finite-dimensional
component is missing. The operator $A$ in the normalized basis $\{\whe_k\}_{k\ge1}$,
$\whe_k=\|\wte_k\|^{-1}\,\wte_k$ is given by the Jacobi matrix which agrees with the original matrix $J$
from some point on. The proof is complete.

\section{Spectra of certain trees with tails}
\label{s2}

As we have already mentioned in Introduction the adjacency matrix $A(\Gamma)$ for the coupling
$\Gamma=G+\bp_\infty(\{a_j\})$ is \eqref{adjmat}. The algorithm suggested in Theorem \ref{algorithm}
provides a way to implement the spectral analysis of the adjacency operator $A(\Gamma)$ which
consists of two stages. First, one has to apply the above algorithm by hand to obtain the canonical form \eqref{canform} and to determine the finite-dimensional and Jacobi components of $\gg$.
Second, the solutions of two algebraic equations (the characteristic equation for $F(\gg)$ and the Jost
equation for $J(\gg)$) provide the spectrum of $\gg$.

\begin{example}\label{starsimp} ``A weighted star''.

We begin with the coupling $\Gamma=S_n(w)+\bp_\infty$, where $S_n(w)$ is a simple weighted star
graph of order $n+1$, $n\ge2$, with vertices $1,\ldots,n$ of degree $1$ and the weight of the edge
$(k,n+1)$ is $w_k$, $1\le k\le n$. The canonical basis $\{\whe_k\}_{k\in\bn}$ looks as follows.
We put
$$ \whe_j:=e_j, \quad j\ge n+1 \ \Longrightarrow \ A(\Gamma)\,
\whe_j=\whe_{j-1}+\whe_{j+1}, \quad j\ge n+2. $$
Next, let $w:=(w_1,w_2,\ldots,w_n)$, $\|w\|=\sqrt{w_1^2+\ldots+w_n^2}$, and let
$$ \whe_{n}:=\frac1{\|w\|}\,\sum_{j=1}^{n} w_k\,e_j\,. $$
Then
$$ A(\Gamma)\,\whe_{n+1}=\|w\|\,\whe_{n}+\whe_{n+2}, \qquad A(\Gamma)\,\whe_{n}=\|w\|\,\whe_{n+1}. $$
So the Jacobi subspace and Jacobi component of $\gg$ are
\begin{equation}\label{jaccom1}
\cl_J=\lc\{\whe_j\}_{j\ge n}, \quad J(\gg)=J\bigl(\{0\},\{\|w\|,1,1,\ldots\}\bigr).
\end{equation}

To find the finite-dimensional component we construct an orthonormal basis in $\bc^n$ by means of
a unitary matrix $\xi=\|\xi_{kj}\|_{k,j=1}^n$ with the specified last column
\begin{equation}\label{ortbas}
f_j:=\sum_{k=1}^n \xi_{kj}e_k(n), \qquad \xi_{kn}=\frac{w_k}{\|w\|}\,, \quad k=1,\ldots,n,
\end{equation}
where $\{e_k(n)\}_{k=1}^n$ is the standard basis in $\bc^n$. Put
\begin{equation}\label{star11}
 \whe_j:=\{f_j,0,0,\ldots\}, \quad j=1,\ldots,n.
\end{equation}
The orthogonality relations $\langle \whe_k,\whe_{n}\rangle=0$, $1\le k\le n-1$, give
\begin{equation}\label{ortrel}
A(\gg)\,\whe_k= \sum_{j=1}^{n} \xi_{kj}\xi_{kn}\cdot \whe_{n+1}=0.
\end{equation}
Hence the finite-dimensional component $F(\gg)=\mathbb O_{n-1}$ on the subspace
$\lc\{\whe_j\}_{j=1}^{n-1}$. So the canonical form is
\begin{equation}\label{canfor1}
\ag\simeq \bo_{n-1}\bigoplus J\bigl(\{0\},\{\|w\|,1,1,\ldots\}\bigr).
\end{equation}

The Jost polynomial is now given by \eqref{jf1}
$$ \|w\|\,u(z)=(1-\|w\|^2)z^2+1. $$
Clearly, $u>0$ for $\|w\|\le1$, and it has zeros inside $(-1,1)$ if and only if $\|w\|>\sqrt2$.
In this case the discrete spectrum is
\begin{equation}\label{starweight}
\s_d(S_n(w)+\bp_\infty) =\Bigl\{0^{(n-1)},
\pm\Bigl(\sqrt{\|w\|^2-1}+\frac1{\sqrt{\|w\|^2-1}}\Bigr)\Bigr\}.
\end{equation}

For the unweighted star $S_n$ the discrete spectrum is
\begin{equation}\label{spec1}
\begin{split}
\s_d(S_n+\bp_\infty) &=\Bigl\{0^{(n-1)},
\pm\Bigl(\sqrt{n-1}+\frac1{\sqrt{n-1}}\Bigr)\Bigr\},
\quad n\ge3, \\ \s_d(S_2+\bp_\infty) &=\{0^{(1)}\}.
\end{split}
\end{equation}
This case is studied in \cite{LeNi-umzh}.

Note that $S_n$ is a complete bipartite graph, $S_n=K_{1,n}$. For the general complete bipartite
graph $K_{p,n+1-p}$ see Example \ref{bipartite} below.
\end{example}

\begin{remark}\label{fourier}
Although the explicit form of the matrix $\xi=\|\xi_{kj}\|_1^n$ in \eqref{ortbas} is immaterial,
it is worth noting that in the unweighted case  $\xi_{kn}=n^{-1/2}$, $1\le k\le n$, and one can take
\begin{equation}\label{fourmat}
\xi=\cf_n:=\frac1{\sqrt{n}}\,\|\eps_n^{kj}\|_{k,j=1}^n, \qquad \eps_n:=e^{\frac{2\pi i}{n}}\,,
\end{equation}
which is known as the {\it Fourier matrix}. Clearly, there are lots of options for $\xi$ to be a
real orthogonal matrix (rotation in $\br^n$ with appropriate Euler's angles, orthogonal polynomials etc.).
\end{remark}

\begin{example}\label{starmult} ``A multiple star''.

Consider an unweighted star-like graph $S_{n,p}$ with $n$ rays, $n\ge2$, each of which contains
$p+1$ vertices, $p\ge2$. The vertices are numbered as
$$ \{1,n+1,\ldots,(p-1)n+1\},\ \ \{2,n+2,\ldots,(p-1)n+2\},\ \ \ldots \ \ \{n,2n,\ldots,pn\}, $$
and the root is $pn+1$, so $S_{n,1}=S_n$. Let $\gg=S_{n,p}+\bp_\infty$.

As above, we put $\whe_j:=e_j$, $j=pn+1,\ldots$, and
\begin{equation}
\whe_{p(n-1)+i}:=\frac1{\sqrt{n}}\,\sum_{q=1}^{n} e_{(i-1)n+q}, \qquad i=1,2,\ldots,p.
\end{equation}
We have $A(\gg)\,\whe_{j}=\whe_{j-1}+\whe_{j+1}$, $j=pn+2\ldots$,
\begin{equation*}
\begin{split}
A(\gg)\whe_{pn+1} &=\sqrt{n}\whe_{pn}+\whe_{pn+2}, \\
A(\gg)\,\whe_{pn} &=\whe_{pn-1}+\sqrt{n}\,\whe_{pn+1}, \\
A(\gg)\,\whe_{p(n-1)+i} &=\whe_{p(n-1)+i-1}+\whe_{p(n-1)+i+1}, \quad i=2,\ldots,p-1, \\
A(\gg)\,\whe_{p(n-1)+1} &=\whe_{p(n-1)+2},
\end{split}
\end{equation*}
so the Jacobi subspace and Jacobi component of $\gg$ are
\begin{equation}\label{jaccom2}
\begin{split}
\cl_J &=\lc\{\whe_j\}_{j\ge p(n-1)+1}, \\ J(\gg) &=J(\{0\},\{a_j\}), \quad
a_j=
\left\{
  \begin{array}{ll}
    \sqrt{n}, & j=p; \\
    1, & j\not=p.
  \end{array}
\right.
\end{split}
\end{equation}

To find the finite-dimensional component note that, by the construction,
$\whe_{p(n-1)+k}\in\lc\{e_{(k-1)n+1},\ldots, e_{kn}\}$. As in the above example,
we supplement each $\whe_{p(n-1)+i}$ to the basis in this subspace by
means of the Fourier matrix \eqref{fourmat}
$$ f_j^{(k)}:=\sum_{q=1}^n \xi_{qj}e_{(k-1)n+q}, \quad f_n^{(k)}:=\sum_{q=1}^n \xi_{qn}e_{(k-1)n+q}
=\whe_{p(n-1)+k} $$
for $1\le j\le n-1$, $1\le k\le p$. As in \eqref{ortrel} we have
\begin{equation}
\begin{split}\label{adjact}
A(\gg)f_j^{(1)} &=f_j^{(2)}, \quad A(\gg)f_j^{(2)}=f_j^{(1)}+f_j^{(3)}, \ldots, \\
A(\gg)f_j^{(p-1)} &=f_j^{(p-2)}+f_j^{(p)}, \quad A(\gg)f_j^{(p)}=f_j^{(p-1)}.
\end{split}
\end{equation}
Relations \eqref{adjact} mean that the subspace $\ch_j:=\lc\{f_j^{(1)}, \ldots, f_j^{(p)}\}$ is
$A(\gg)$-invariant, and $A(\gg)\vert \ch_j=A(\bp_p)$. There are exactly $n-1$ such subspaces
for $j=1,\ldots,n-1$. Finally, we come to the following canonical form for the adjacency matrix
\begin{equation}\label{canfor2}
A(\gg)\simeq \Bigl(\bigoplus_{j=1}^{n-1}A(\bp_p)\Bigr) \bigoplus J(\gg).
\end{equation}

The Jost polynomial is computed in \eqref{rank2}
$$ -\sqrt{n}u(z)=(n-1)z^2\,\frac{z^{2p}-1}{z^2-1}-1=\frac{(n-1)z^{2p+2}-nz^2+1}{z^2-1}\,. $$
It is easy to see that the polynomial $q(x)=(n-1)x^{p+1}-nx+1$ has exactly two positive roots
$0<x_1(p,n)<x_1'(p,n)=1$, so the first one has the spectral meaning. Hence
\begin{equation}\label{spec2}
\s_d(\gg)=\Bigl\{2\cos\frac{\pi j}{p+1}\Bigr\}_{j=1}^p
\bigcup\,\Bigl\{\pm\Bigl(\sqrt{x_1(p,n)}+\frac1{\sqrt{x_1(p,n)}}\Bigr)\Bigr\}\,,
\end{equation}
so we have $p$ eigenvalues on the absolutely continuous spectrum $[-2,2]$ of multiplicity $n-1$ and
two simple eigenvalues off $[-2,2]$.

Note that the weighted multiple star can be treated in exactly the same fashion.

The spectrum of the multiple star was studied in \cite{LeNi-dan}, but no explicit formulae were provided.

\end{example}

\medskip

The problem becomes harder (in the sense of computation) if the original finite star-like graph is
nonsymmetric (the rays are different).

\begin{example}\label{sword} ``A sword''.

\begin{picture}(300, 160)
\multiput(60,80) (30,0) {5} {\circle* {4}}
\multiput(90,140) (0,-30) {5} {\circle* {4}}
\multiput(62,80) (30,0) {4} {\line(1, 0) {26}}
\multiput(90,138) (0,-30) {4} {\line(0,-1) {26}}
\multiput(200,80) (10,0) {3} {\circle* {2}}
\put(60,84) {$1$}
\put(94,84) {$6$}
\put(94,110) {$3$}
\put(94,140) {$2$}
\put(94,50) {$5$}
\put(94,20) {$4$}
\put(122,83) {$7$}
\put(152,83) {$8$}
\put(182,83) {$9$}
\put(140,50) {$T_{1,2,2,\infty}$}

\end{picture}

This graph (also known as $T(1,2,2,\infty)$) can be viewed as a coupling $\gg=T(1,2,2)+\bp_\infty$.
We put $\whe_k=e_k$, $k\ge6$, so
$$ A(\gg)\,\whe_k=\whe_{k-1}+\whe_{k+1}, \qquad k\ge7. $$
Next, let
$$ \whe_5:=\frac{e_1+e_3+e_5}{\sqrt{3}}\,, \qquad \whe_4:=\frac{e_2+e_4}{\sqrt{2}}\,, $$
so that
$$ A(\gg)\,\whe_6=\sqrt{3}\,\whe_5+\whe_7, \qquad A(\gg)\,\whe_5=\sqrt{\frac23}\,
\whe_4+\sqrt{3}\,\whe_6. $$
We want to determine the vector $\whe_3$ from the equation
$$ A(\gg)\,\whe_4=a\whe_3+\sqrt{\frac23}\,\whe_5=a\whe_3+\frac{\sqrt{2}}3\,(e_1+e_3+e_5), $$
which gives
$$ a=\frac1{\sqrt{3}}\,, \qquad \whe_3:=\frac{-2e_1+e_3+e_5}{\sqrt{6}}\,, $$
and so
$$ A(\gg)\,\whe_3=\frac1{\sqrt{6}}\,(-2e_6+e_6+e-2+e_6+e-4)=\frac{e_2+e_4}{\sqrt{6}}=
\frac{\whe_4}{\sqrt{3}}\,. $$
Clearly, $\whe_3$ is orthogonal to $\whe_j$ for $j\ge4$, and $\|\whe_3\|=1$.
We come thereby to the Jacobi subspace and Jacobi component of $\gg$
\begin{equation}\label{jaccom3}
\cl_J=\lc\{\whe_j\}_{j\ge3}, \qquad J(\gg)=
J\Bigl(\{0\}, \Bigl\{\frac1{\sqrt{3}}, \sqrt{\frac23}, \sqrt{3}, 1,1\ldots \Bigr\}\Bigr).
\end{equation}

To compute the finite-dimensional component, we put
$$ \whe_2:=\frac{e_3-e_5}{\sqrt{2}}\,, \qquad \whe_1:=\frac{e_2-e_4}{\sqrt{2}}\,, $$
so $A(\gg)\,\whe_2=\whe_1$, $A(\gg)\,\whe_1=\whe_2$, and $\{\whe_j\}_{j\ge1}$ turms out to
be the canonical basis in $\ell^2$ for $A(\gg)$. The adjacency operator is unitarily equivalent to
\begin{equation}\label{canfor3}
A(\gg)\simeq F(\gg)\bigoplus J(\gg),
\qquad F(\gg)=A(\bp_2)=\begin{bmatrix}
0 & 1 \\
1 & 0 &
\end{bmatrix}
\end{equation}

The Jost polynomial can be computed directly from relations \eqref{3term}
$$ u(z)=\sqrt{\frac32}\,(1-z^2-3z^4-2z^6). $$
It is not hard to see that the cubic polynomial $p(x)=2x^3+3x^2+x-1$ has the only real root $x_1$,
$0<x_1<1$. Hence
\begin{equation}\label{spec3}
\s_d(\gg)=\Bigl\{\pm1, \pm\Bigl(\sqrt{x_1}+\frac1{\sqrt{x_1}}\Bigr)\Bigr\}.
\end{equation}
\end{example}

\section{Couplings of graphs with cycles and their spectra}

\begin{example}\label{kite} ``A kite''.

\begin{picture}(300,120)
\put(60,60) {\circle*{4}} \put(80,25) {\circle*{4}} \put(120,25) {\circle*{4}}
\put(140,60) {\circle*{4}} \put(120,95) {\circle*{4}} \put(80,95) {\circle*{4}}
\multiput(180,60)(40,0){2} {\circle*{4}}  \multiput(235,60)(10,0){3} {\circle*{2}}
\multiput(142,60)(40,0){2} {\line(1,0){36}}
\put(81,95) {\line(1,0){40}} \multiput(90,25)(10,0){3} {\circle*{2}} \put(100,50) {$\mathbb{C}_{m}$}
\put(120,25) {\line(3,5){20}} \put(60,60) {\line(3,5){20}}
\put(120,95) {\line(3,-5){20}}  \put(60,60) {\line(3,-5){20}}
\put(143,64) {$m$} \put(167,64) {$m+1$} \put(205,64) {$m+2$}
\put(122,98) {$1$} \put(74,98) {$2$} \put(52,58) {$3$} \put(70,23) {$4$} \put(128,25) {$m-1$}
\end{picture}

Consider the coupling of $m$-cycle and the infinite path $\gg=\bc_m+\bp_\infty$. The construction
depends heavily on the parity of $m$.

{\bf Case 1}. Let $m=2n$, $n\ge2$. We put as usual $\whe_k:=e_k$, $k\ge 2n$, and
\begin{equation*}
\whe_j:=\frac{e_{j}+e_{2n-j}}{\sqrt{2}}\,, \quad j=n+1,\ldots,2n-1, \qquad \whe_n=e_n.
\end{equation*}
Then
$$
\left\{
  \begin{array}{ll}
    \ag\whe_{2n}=\sqrt{2}\,\whe_{2n-1}+\whe_{2n+1}, \\
    \ag\whe_{2n-1}=\whe_{2n-2}+\sqrt{2}\,\whe_{2n},
  \end{array}
\right. \qquad A(\gg)\,\whe_{n+j}=\whe_{n+j-1}+\whe_{n+j+1}
$$
for $2\le j\le n-2$, and
$$ A(\gg)\,\whe_{n+1}=\sqrt{2}\,\whe_n+\whe_{n+2}, \qquad A(\gg)\,\whe_n=\sqrt{2}\,\whe_{n+1}. $$
The Jacobi subspace and component are
\begin{equation}\label{jaccom4}
\cl_J=\lc\{\whe_j\}_{j\ge n}, \ \ J(\gg):=J(\{0\},\{a_j\}), \ \
a_j=
\left\{
  \begin{array}{ll}
    1, & j\not= 1,n, \\
    \sqrt{2}, & j=1,n.
  \end{array}
\right.
\end{equation}

Next, put
$$ \whe_j:=\frac{e_j-e_{2n-j}}{\sqrt{2}}\,, \qquad j=1,\ldots,n-1. $$
Then $\{\whe_j\}_{j\ge1}$ is the orthonormal basis in $\ell^2$ and
$$ \ag\whe_{n-1}=\whe_{n-2}, \quad \ag\whe_{j}=\whe_{j-1}+\whe_{j+1}, \ \ j=2,\ldots,n-2,
\quad \ag\whe_1=\whe_2, $$
so finally
\begin{equation}\label{canfor4}
A(\gg)\simeq F(\gg)\bigoplus J(\gg),
\qquad F(\gg):=A(\bp_{n-1}).
\end{equation}

The Jost polynomial is computed in \eqref{jf5}
$$ -u(z)=z^{2n}+2z^2-1, $$
so, by the Descarte's rule, it has two real roots $-1<x_2(n)<0<x_1(n)<1$, $x_1(n)=-x_2(n)$.
Hence
\begin{equation}\label{spec4}
\s_d(\bc_{2n}+\bp_\infty)=
\Bigl\{2\cos\frac{\pi j}{n}\Bigr\}_{j=1}^{n-1}\bigcup
\Bigl\{\pm\Bigl(x_1(n)+\frac1{x_1(n)}\Bigr)\Bigr\}.
\end{equation}

{\bf Case 2}. Let $m=2n+1$, $n\ge1$. With $\whe_k:=e_k$, $k\ge 2n+1$, we put
$$ \whe_{n+j}:=\frac{e_{n-j+1}+e_{n+j}}{\sqrt{2}}\,, \quad j=1,2,\ldots,n, $$
so
\begin{equation*}
\begin{split}
\ag\whe_j &=\whe_{j-1}+e_{j+1}, \quad j=2n+2,2n+3,\ldots \\
\ag\whe_{2n+1} &=\sqrt{2}\,\whe_{2n}+\whe_{2n+2}, \quad \ag\whe_{2n} =
\whe_{2n-1}+\sqrt{2}\,\whe_{2n+1}, \\
\ag\whe_j &=\whe_{j-1}+e_{j+1}, \quad j=n+2,\ldots,2n-1, \\
\ag\whe_{n+1} &=\whe_{n+1}+\whe_{n+2}
\end{split}
\end{equation*}
(with the obvious modification for $n=1,2$). The Jacobi subspace and Jacobi components
are now $\cl_J=\lc\{\whe_j\}_{j\ge n+1}$,
\begin{equation}\label{jaccom5}
J(\gg):=J(\{1,0,0,\ldots\},\{a_j\}), \quad
a_j=
\left\{
  \begin{array}{ll}
    1, & j\not= n; \\
    \sqrt{2}, & j=n.
  \end{array}
\right.
\end{equation}

The finite-dimensional component arises from the complement to the basis in the entire space
$$ \whe_j:=\frac{e_{n-j+1}-e_{n+j}}{\sqrt{2}}\,, \quad j=1,2,\ldots,n $$
and relations
$$ \ag\whe_1=-\whe_1+\whe_2, \quad \ag\whe_j =\whe_{j-1}+e_{j+1}, \ \ j=2,\ldots,n-1, \ \
\ag\whe_n=\whe_{n-1}, $$
so finally
\begin{equation}\label{canfor5}
A(\gg)\simeq F(\gg)\bigoplus J(\gg),
\qquad F(\gg):=J(\{-1,0,\ldots,0\}, \{1\})
\end{equation}
is a Jacobi matrix of order $n$.

The Jost polynomial is given in \eqref{jf3}
$$ -\sqrt2(z+1)u(z)=z^{2n+1}+2z^2-1. $$
The Descarte's rule applied to the polynomial $p_{2n+1}(x)=x^{2n+1}+2x^2-1$ shows that there
are two real roots such that $ -1<x_2(n)<0<x_1(n)<1$. Both of them contribute to the discrete
spectrum of the Jacobi component
$$ \s_d(J(\gg))=\Bigl\{x_1(n)+\frac1{x_1(n)}\,, x_2(n)+\frac1{x_2(n)}\Bigr\}\,. $$

To find the spectrum of $F(\gg)$ in \eqref{canfor5} we expand the characteristic
determinant over the first row
$$ \det|F(\gg)+x|=\det|A(\bp_n)+x|-\det|A(\bp_{n-1}+x|=U_n\Bigl(\frac{x}2\Bigr)-
U_{n-1}\Bigl(\frac{x}2\Bigr), $$
where $U_n$ is the Chebyshev polynomial of the second kind,
$$ U_n(\cos t):=\frac{\sin(n+1)t}{\sin t}\,, \quad U_n(\cos t)-U_{n-1}(\cos t)=
\frac{\cos\frac{2n+1}2\,t}{\cos\frac{t}2}\,. $$
So the spectrum of $F(\gg)$ is
\begin{equation}\label{fincomkite}
\s(F(\gg))=\Bigl\{2\cos\frac{2\pi j}{2n+1}\Bigr\}_{j=1}^{n}.
\end{equation}
\end{example}

Actually, we can combine the latter two results in a unique way.

\begin{proposition}\label{speckite}
The spectrum of the graph $\gg=\bc_m+\bp_\infty$ is
$$ \s(\bc_m+\bp_\infty)=[-2,2]\cup\s_d(\bc_m+\bp_\infty) $$
with the discrete component
\begin{equation}\label{spectrkite}
\s_d(\bc_m+\bp_\infty) =\Bigl\{2\cos\frac{2\pi k}{m}\Bigr\}_{k=1}^l\bigcup\,\{\l_1(m),\l_2(m)\},
 \ l=\Bigl[\frac{m-1}2\Bigr],
\end{equation}
where
$$ \l_j(m) =x_j(m)+\frac1{x_j(m)}\,, \quad j=1,2, \quad -1<x_2(m)<0<x_1(m)<1 $$
are real roots of the polynomial $p(x)=x^m+2x^2-1$.
\end{proposition}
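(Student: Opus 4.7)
The plan is to recognize that Proposition \ref{speckite} is a compact reformulation of the two parity cases already handled in Example \ref{kite}, so the proof reduces to verifying that the unified formula \eqref{spectrkite} agrees with each case. By the general setup in \eqref{spect} and the DS theorem applied to the canonical decompositions \eqref{canfor4} and \eqref{canfor5}, the essential spectrum is $[-2,2]$ in both cases, so only the discrete spectrum $\s(F(\gg))\cup \s_d(J(\gg))$ has to be matched with the right-hand side of \eqref{spectrkite}.

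For $m=2n$ (even), Case~1 of Example \ref{kite} gives $F(\gg)=A(\bp_{n-1})$, hence by \eqref{specpath} contributes $\{2\cos(\pi j/n)\}_{j=1}^{n-1}$; rewriting $\pi/n=2\pi/m$ and using $n-1=\lfloor(m-1)/2\rfloor=l$ yields exactly the cosine part. The Jost polynomial of $J(\gg)$ is $-u(z)=z^{2n}+2z^{2}-1=p(z)$; since $p$ is even in $z$ (because $m$ is even), its real roots in $(-1,1)$ come as a symmetric pair $\pm x_1(m)$, producing the two extra eigenvalues $\pm(x_1+1/x_1)=\l_{1,2}(m)$.

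For $m=2n+1$ (odd), Case~2 and \eqref{fincomkite} give $\s(F(\gg))=\{2\cos(2\pi j/m)\}_{j=1}^{n}$ directly, with $n=\lfloor(m-1)/2\rfloor=l$. The Jost polynomial now satisfies $-\sqrt{2}(z+1)u(z)=p(z)$, and the factor $z+1$ corresponds to the boundary point $z=-1$ (sending under the Zhukovsky map to $\l=-2\in[-2,2]$), so it does not generate a discrete eigenvalue; the discrete spectrum of $J(\gg)$ thus arises from the real roots of $p$ in the open interval $(-1,1)$. The Descartes rule, applied to $p(x)$ and $p(-x)$ (or to the quotient $p(x)/(x+1)$), produces exactly the pair $-1<x_2(m)<0<x_1(m)<1$, giving the two eigenvalues $\l_j(m)=x_j(m)+1/x_j(m)$.

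In both parities, the ``extra'' eigenvalues are Zhukovsky images of the real roots in $(-1,1)$ of one and the same polynomial $p(x)=x^{m}+2x^{2}-1$, which is the unification asserted in the proposition. The main bookkeeping obstacles are matching the two different cosine indexings ($\pi j/n$ versus $2\pi j/(2n+1)$) into the single expression $2\pi k/m$ with $k\le\lfloor(m-1)/2\rfloor$, and correctly discarding the spurious boundary root $z=-1$ of $p$ in the odd case so that $p$ is counted the same way for all $m$.
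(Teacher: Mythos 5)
Your proposal follows essentially the same route as the paper: Proposition \ref{speckite} is stated immediately after Example \ref{kite} with only the remark that the two parity cases can be ``combined in a unique way,'' so the intended proof is exactly the verification you carry out, and your bookkeeping (the reindexing $\pi j/n=2\pi j/m$ with $l=n-1$ for $m=2n$, the identification $l=n$ for $m=2n+1$, and the observation that in both parities the off-band eigenvalues are Zhukovsky images of the roots of $p(x)=x^m+2x^2-1$ in $(-1,1)$) is correct and even slightly more careful than the paper's, since you make explicit why the factor $z+1$ in $-\sqrt2(z+1)u(z)=p(z)$ may be discarded ($z=-1$ maps to the band endpoint $-2$ and is not a zero of the Jost function itself).

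One caveat, which you inherit from the paper rather than introduce: the Descartes-rule step does not actually deliver the pair $-1<x_2(m)<0<x_1(m)<1$ when $m=3$. There $p(x)=x^3+2x^2-1=(x+1)(x^2+x-1)$, whose negative roots are $-1$ and $-(1+\sqrt5)/2<-1$, so there is no root in $(-1,0)$ and the Jacobi component contributes only the single eigenvalue $\sqrt5$. For odd $m=2n+1$ with $n\ge2$ the quotient $q(x)=p(x)/(x+1)$ satisfies $q(-1)=2n-3>0$ and $q(0)=-1<0$, so the claimed negative spectral root does exist; for even $m$ the symmetry argument you give is fine. So your proof (and the Proposition as stated) is valid for all $m\ge4$, and the $m=3$ case needs to be excluded or corrected; this is a defect of Example \ref{kite}, Case 2, at $n=1$ in the paper itself.
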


\begin{example}\label{bipartite} ``The complete bipartite graph''.

Let $G=K_{p,n+1-p}$ be the complete bipartite graph of order $n+1$ \cite[p.16]{CDS80},
$\gg=K_{p,n+1-p}+\bp_\infty$. Define
$$ S_1:=e_1+\ldots+e_p, \qquad S_2:=e_{p+1}+\ldots+e_n, $$
and put
\begin{equation*}
\whe_j :=e_j, \quad j=n+1, n+2,\ldots, \quad \whe_n :=\frac{S_1}{\sqrt{p}}\,, \quad \whe_{n-1}:=\frac{S_2}{\sqrt{n-p}}\,.
\end{equation*}
Then $\ag\whe_{j}=\whe_{j-1}+\whe_{j+1}$ for $j\ge n+2$ and
$$ \ag\whe_{n+1}=S_1+\whe_{n+2}=\sqrt{p}\,\whe_{n}+\whe_{n+2}. $$
For the bipartite graph we have $\ag e_k=S_2+e_{n+1}$ for $1\le k\le p$, and so
$$ \ag\whe_{n}=\frac1{\sqrt{p}}\,p(S_2+\whe_{n+1})=
\sqrt{p(n-p)}\,\whe_{n-1}+\sqrt{p}\,\whe_{n+1}. $$
Next, since $\ag e_k=S_1$ for $p+1\le k\le n$, then $\ag\whe_{n-1}=\sqrt{p(n-p)}\,\whe_n$,
and the Jacobi component is
\begin{equation}\label{jaccom7}
\cl_J=\lc\{\whe_j\}_{j\ge n-1}, \quad
J(\gg):=J(\{0\},\{\sqrt{p(n-p)},\sqrt{p},1,1,\ldots\}).
\end{equation}

To find the finite-dimensional component, note that $\whe_n\in\lc\{e_k\}_{k=1}^p$, $\whe_{n-1}\in\lc\{e_k\}_{k=p+1}^n$,
so we amplify these two vectors to the orthonormal base
in the corresponding subspaces by means of the Fourier matrices $\xi=\cf_p$ and $\eta=\cf_{n-p}$
\begin{equation*}
\begin{split}
\whe_j &:=\sum_{k=1}^p \xi_{kj}e_k, \quad j=1,\ldots,p-1, \quad \quad \ \ \
\whe_n=\frac1{\sqrt{p}}\,\sum_{k=1}^p e_k,   \\
\whe_j &:=\sum_{k=p+1}^n \eta_{kj}e_k, \quad j=p,\ldots,n-2, \quad
\whe_{n-1}=\frac1{\sqrt{n-p}}\,\sum_{k=p+1}^n e_k.
\end{split}
\end{equation*}
We have $\ag\whe_k=0$ for $1\le k\le n-2$, so
\begin{equation}\label{canfor7}
A(\gg)\simeq F(\gg)\bigoplus J(\gg),
\qquad F(\gg):=\bo_{n-2}.
\end{equation}

The Jost polynomial is given in \eqref{jf2}
$$ -p\sqrt{n-p}\,u(z)=(p-1)z^4+(p(n-p)+p-2) z^2-1=q(z^2). $$
It is easy to see that the quadratic polynomial $q$ has the only root $x_1(p,n)$
in $(-1,1)$, $0<x_1(p,n)<1$, so
\begin{equation}\label{spec7}
\s_d(K_{p,n+1-p}+\bp_\infty)=\{0^{(n-2)}\}\bigcup\,
\Bigl\{\pm\Bigl(\sqrt{x_1(p,n)}+\frac1{\sqrt{x_1(p,n)}}\Bigr)\Bigr\}.
\end{equation}

For $p=1$ we have Example \ref{starsimp}.
\end{example}

\begin{example}\label{propeller} ``A propeller with equal blades''.

\begin{picture}(300,180)

\multiput(40,80)(30,0){4}  {\circle*{4}} \put(160,110) {\circle*{4}}
\multiput(40,140)(30,0){4}  {\circle*{4}}
\multiput(190,80)(30,0){4} {\circle*{4}}
\multiput(190,140)(30,0){4} {\circle*{4}}
\put(40,82) {\line(0,1){56}} \put(280,82) {\line(0,1){56}}

\put(192,80) {\line(1,0){26}} \put(252,80) {\line(1,0){26}}
\put(42,80) {\line(1,0){26}}  \put(102,80) {\line(1,0){26}}
\put(192,140) {\line(1,0){26}} \put(252,140) {\line(1,0){26}}
\put(42,140) {\line(1,0){26}}  \put(102,140) {\line(1,0){26}}

\multiput(76,80)(8,0){3} {\circle*{2}} \multiput(226,80)(8,0){3} {\circle*{2}}
\multiput(76,140)(8,0){3} {\circle*{2}} \multiput(226,140)(8,0){3} {\circle*{2}}

\put(130,80) {\line(1,1){30}} \put(160,110) {\line(1,1){30}}
\put(130,140) {\line(1,-1){30}} \put(160,110) {\line(1,-1){30}}

\multiput(160,110)(0,-30) {3} {\circle* {4}}
\multiput(160,42)(0,-8) {3} {\circle* {2}}
\put(160,108) {\line(0,-1){26}} \put(160,78) {\line(0,-1){26}}

\put(168,110) {$4n+1$}
\put(130,144) {$1$} \put(100,144) {$2$} \put(60,144) {$n-1$} \put(40,144) {$n$}
\put(30,70) {$n+1$} \put(60,70) {$n+2$} \put(125,70) {$2n$}
\put(175,144) {$2n+1$} \put(215,144) {$2n+2$} \put(275,144) {$3n$}
\put(178,70) {$4n$} \put(212,70) {$4n-1$} \put(275,70) {$3n+1$}

\put(50,110) {$\mathbb{C}_{2n+1}$} \put(250,110) {$\mathbb{C}_{2n+1}$}

\end{picture}

The graph $\gg$ is composed of two equal cycles $\bc_{2n+1}$ having one common vertex, with
the infinite path attached to it. We put $\whe_k=e_k$, $k\ge 4n+1$,
$$ \whe_{4n-j}:=\frac{e_{j+1}+e_{2n-j}+e_{2n+j+1}+e_{4n-j}}2\,, \qquad j=0,1,\ldots,n-1. $$
Then
\begin{equation*}
\begin{split}
\ag\whe_{4n+1} &=2\whe_{4n}+\whe_{4n+2}, \quad \ag\whe_{4n}=\whe_{4n-1}+2\whe_{4n+1}, \\
\ag\whe_{4n-j} &=\whe_{4n-j-1}+\whe_{4n-j+1}, \quad j=1,2,\ldots, n-2, \\
\ag\whe_{3n+1} &=\whe_{3n+1}+\whe_{3n+2}.
\end{split}
\end{equation*}
So the Jacobi subspace and Jacobi components are now $\cl_J=\lc\{\whe_j\}_{j\ge 3n+1}$,
\begin{equation}
J(\gg):=J(\{1,0,0,\ldots\},\{a_j\}), \quad
a_j=
\left\{
  \begin{array}{ll}
    1, & j\not= n; \\
   2, & j=n
  \end{array}
\right.
\end{equation}
(cf. \eqref{jaccom5}).

The finite-dimensional component arises by putting
\begin{equation*}
\begin{split}
f_j &:=\frac{e_{n-j+1}-e_{n+j}}{\sqrt{2}}\,, \\
g_j &:=\frac{e_{3n-j+1}-e_{3n+j}}{\sqrt{2}}\,, \\
h_j &:=\frac{e_{n-j+1}+e_{n+j}-e_{3n-j+1}-e_{3n+j}}2\,, \quad j=1,\ldots,n.
\end{split}
\end{equation*}
Each of them satisfies
$$ \ag y_1=-y_1+y_2, \quad \ag y_k=y_{k-1}+y_{k+1}, \ k=2,\ldots, n-1, \ \ \ag y_n=y_{n-1}, $$
so each of linear spans is $n$-dimensional and $\ag$-invariant subspace, and (cf. \eqref{canfor5})
$$ F(\gg)=\bigoplus_{k=1}^3 J(\{-1,0,\ldots,0\},\{1\}). $$

The rest is the same as in Example \ref{kite}. The Jost polynomial is
$$ -2(z+1)u(z)=3z^{2n+1}+4z^2-1, $$
and it has two real roots such that $ -1<x_2(n)<0<x_1(n)<1$. Both of them
contribute to the discrete
spectrum of the Jacobi component
$$ \s_d(J(\gg))=\Bigl\{x_1(n)+\frac1{x_1(n)}\,, x_2(n)+\frac1{x_2(n)}\Bigr\}\,. $$
The spectrum of $F(\gg)$ is given in \eqref{fincomkite}, but now each eigenvalue has multiplicity $3$.

For spectral properties of finite propeller graphs see \cite{liuzhou}.
\end{example}

\begin{remark}
If the blades are $\bc_{2n}$, the argument goes through in exactly the same way
(see Example \ref{kite}, case 1). Moreover, the algorithm applies equally well to propellers with
$p$ {\it equal} blades for $p\ge2$ with the infinite path attached to their common vertex (``flowers'').
\end{remark}

\newpage

\begin{example}\label{umbrella} ``An umbrella''.

\begin{picture}(300, 160)

\multiput(100, 85) (40,0) {3} {\circle* {4}}
\multiput(102, 85) (40,0) {2} {\line(1, 0) {36}}
\multiput(195, 85) (10,0) {3} {\circle* {2}}

\multiput(60, 145) (0,-40) {4} {\circle* {4}}
\multiput(60, 143) (0,-40) {3} {\line(0,-1) {36}}

\put(60,145) {\line(2,-3) {40}}
\put(60,105) {\line(2,-1) {40}}
\put(60,65) {\line(2,1) {40}}
\put(60,25) {\line(2,3) {40}}

\put(102, 89) {$5$}
\put(142, 89) {$6$}
\put(182, 89) {$7$}

\put(52, 145) {$1$}
\put(52, 105) {$2$}
\put(52, 65) {$3$}
\put(52, 25)  {$4$}

\end{picture}

The first step is standard
$$ \whe_k:=e_k, \quad k\ge5, \qquad \whe_4:=\frac{e_1+e_2+e_3+e_4}2\,, $$
so
\begin{equation*}
\begin{split}
\ag\whe_{k} &=\whe_{k-1}+\whe_{k+1}, \quad k\ge6, \quad \ag\whe_5=2\whe_4+\whe_6, \\
\ag\whe_4 &=\frac{e_1+2e_2+2e_3+e_4}2+2\whe_5.
\end{split}
\end{equation*}
We find $\whe_3$ from $a\whe_3=\ag\whe_4-b\whe_4-2\whe_5$, where the parameters $a,b$ are
determined from the orthogonality
$\langle\whe_3,\whe_4\rangle=0$, and normalization $\|\whe_3\|=1$. It is easy to see that
$$ a=\frac12\,, \ \ b=\frac32\,, \ \ \whe_3=\frac{-e_1+e_2+e_3-e_4}2\,, \quad \ag\whe_4=\frac12\,\whe_3+\frac32\,\whe_4+2\whe_5. $$
Hence
$$ \ag\whe_3=\frac{e_1+e_4}2=-\frac12\,\whe_3+\frac12\,\whe_4, $$
so the Jacobi subspace and Jacobi components are
\begin{equation}\label{jaccom8}
\cl_J=\lc\{\whe_j\}_{j\ge 3}, \quad
J(\gg):=J\Bigl(\Bigl\{-\frac12\,,\frac32\,, 0,\ldots\Bigr\},
\Bigl\{\frac12\,, 2,1,\ldots\Bigr\}\Bigr).
\end{equation}

Next, put
$$ \whe_1:=\frac{e_1-e_4}{\sqrt2}, \quad \whe_2:=\frac{e_2-e_3}{\sqrt2}
\Longrightarrow \ag\whe_1=\whe_2, \quad \ag\whe_2=\whe_1-\whe_2, $$
and so the canonical from is
\begin{equation}\label{canfor8}
A(\gg)\simeq F(\gg)\bigoplus J(\gg),
\qquad F(\gg):=\begin{bmatrix}
0 & 1 \\
1 & -1 &
\end{bmatrix}.
\end{equation}

The Jost polynomial is now
$$ -u(z)=3z^4+3z^3+3z^2+z-1, $$
and it has two real roots $-1<x_2<0<x_1<1$. Hence, the discrete spectrum is
$$ \s_d(\gg)=\Bigl\{\frac{-1\pm\sqrt5}2\,, x_1+\frac1{x_1}\,, x_2+\frac1{x_2}\Bigr\}.$$
\end{example}

\begin{example}\label{wheel} ``A wheel''.

The wheel $W_n$ is a graph consisting of a cycle on $n$ vertices, $1,2,\ldots,n$, $n\ge3$,
and the vertex $n+1$, adjacent to each of $1,2,\ldots,n$ (cf. \cite[p. 49]{Bap}). Consider
the coupling $\gg=W_n+\bp_\infty$ with the path attached to the root $n+1$. We put
$$ \whe_j:=e_j, \ \ j\ge n+1, \qquad \whe_n:=\frac1{\sqrt{n}}\,\sum_{k=1}^n e_k, $$
and so
\begin{equation*}
\begin{split}
\ag\whe_{j} &=\whe_{j-1}+\whe_{j+1}, \quad j\ge n+2, \quad \ag\whe_{n+1}=\sqrt{n}\,\whe_n+\whe_{n+2}, \\
\ag\whe_n &=2\whe_n+\sqrt{n}\,\whe_{n+1}+2\whe_5,
\end{split}
\end{equation*}
so the Jacobi component is
\begin{equation}\label{jaccom9}
\cl_J=\lc\{\whe_j\}_{j\ge n}, \quad
J(\gg):=J(\{2,0, 0,\ldots\},\{\sqrt{n},1,1,\ldots\}).
\end{equation}

To find the finite-dimensional component, consider the adjacency operator $A(\bc_n)$ acting
in $\bc^n$.
Let
$$ \{f_1,f_2,\ldots,f_n\}, \quad f_j\in\bc^n, \quad f_n=\frac1{\sqrt{n}}\,\{1,1,\ldots,1\} $$
be the orthonormal basis of its eigenvectors. Take $\whe_j:=\{f_j,0,0,\ldots\}$.
It is a matter of a simple computation to verify that
$$ A(\gg)\,\whe_j=\{A(\bc_n)\,f_j,0,0,\ldots\}=2\cos\frac{2\pi(n-1)}{n}\,\whe_j, \quad j=1,\ldots,n-1, $$
so the canonical form is
\begin{equation}\label{canfor9}
A(\gg)\simeq F(\gg)\oplus J(\gg),
\  F(\gg):=\diag\Bigl\{2\cos\frac{2\pi}{n}\,,\ldots,2\cos\frac{2\pi(n-1)}{n}\Bigr\}.
\end{equation}

The Jost polynomial is computed in \eqref{jf1}
$$ -\sqrt{n}u(z)=(n-1)z^2+2z-1. $$
It has two roots
$$ x_2(n)=-\frac{\sqrt{n}+1}{n-1}<0<x_1(n)=\frac{\sqrt{n}-1}{n-1}\,, $$
such that $x_1(n)\in(0,1)$ for all $n\ge3$, and for $x_2(n)$ we have
$$ x_2(n)\le -1, \quad n=3,4; \qquad x_2(n)>-1, \quad n\ge5. $$
Hence, the discrete spectrum is either
$$ \s_d(W_n+\bp_\infty)=\Bigl\{2\cos\frac{2k\pi}{n}\Bigr\}_{k=1}^{n-1}\,\bigcup\,
\Bigl\{x_1(n)+\frac1{x_1(n)}\Bigr\}\,, \quad n=3,4, $$
or
$$ \s_d(W_n+\bp_\infty)=\Bigl\{2\cos\frac{2k\pi}{n}\Bigr\}_{k=1}^{n-1}\,\bigcup\,
\Bigl\{x_1(n)+\frac1{x_1(n)}\,, x_2(n)+\frac1{x_2(n)}\Bigr\}, \quad n\ge5. $$

\end{example}

\section{Graphs with several tails}

Given a finite graph $G$, we can attach several infinite paths to one or several
vertices of $G$. Although such graphs are not exactly couplings in the sense of
Definition \ref{coupl}, the algorithm suggested in Theorem \ref{algorithm} works in
this case as well (see Remark \ref{multicoupl}).

\begin{example}  ``A cycle with a double tail''.

Given the cycle $\bc_{2n+1}$ with vertices $1,2,\ldots,2n+1$, we attach two infinite paths to the
last vertex. Denote this graph by $\gg=\bc_{2n+1}+2\bp_\infty$.

\begin{picture}(300,120)
\put(60,60) {\circle*{4}} \put(80,25) {\circle*{4}} \put(120,25) {\circle*{4}}
\put(140,60) {\circle*{4}} \put(120,95) {\circle*{4}} \put(80,95) {\circle*{4}}

\multiput(140,60)(30,10){2} {\line(3,1){28}} \multiput(140,60)(30,-10){2} {\line(3,-1){28}}
\multiput(140,60) (30,10){3} {\circle*{4}} \multiput(140,60) (30,-10){3} {\circle*{4}}
\multiput(200,80) (10,3){4} {\circle*{2}}  \multiput(200,40) (10,-3){4} {\circle*{2}}

\put(81,95) {\line(1,0){40}} \multiput(90,25)(10,0){3} {\circle*{2}} \put(90,35) {$\mathbb{C}_{2n+1}$}
\put(120,25) {\line(3,5){20}} \put(60,60) {\line(3,5){22}}
\put(120,95) {\line(3,-5){20}}  \put(60,60) {\line(3,-5){20}}
\put(104,58) {$2n+1$} \put(154,78) {$2n+2$} \put(154,36) {$2n+3$}
\put(184,90) {$2n+4$} \put(186,26) {$2n+5$}
\put(122,98) {$1$} \put(74,98) {$2$} \put(52,58) {$3$} \put(70,23) {$4$} \put(128,25) {$2n$}
\end{picture}

We proceed as in Example \ref{kite}, case 2, (see Remark \ref{multicoupl}) and put
$$ \wte_{n+j}:=\frac{e_{n-j+1}+e_{n+j}}{\sqrt{2}}\,, \quad
\wte_j:=\frac{e_{n-j+1}-e_{n+j}}{\sqrt{2}}\,, \quad j=1,2,\ldots,n, $$
and
$$ \wte_{2n+k}:=\frac{e_{2n+k}+e_{2n+k+1}}{\sqrt{2}}\,, \quad k=1,2,\ldots. $$
Clearly, the adjacency operator on the $\ag$-invariant subspace $\widetilde\cl=\lc\{\wte_j\}_{j\ge 1}$
acts exactly as the one in Example \ref{kite}, so the spectrum of $\ag\vert\widetilde\cl$ is known.

The rest is obvious. The orthogonal complement $\ell^2\ominus\widetilde\cl$ is spanned by the system
of vectors
$$ \widetilde f_{k}:=\frac{e_{2n+k}-e_{2n+k+1}}{\sqrt{2}}\,, \quad k=1,2,\ldots, $$
and $\ag\vert\ell^2\ominus\widetilde\cl=J_0$. So
\begin{equation}
A(\gg)\simeq F(\gg)\bigoplus J(\gg)\bigoplus J_0,
\end{equation}
$F(\gg)$, $J(\gg)$ are given in \eqref{canfor5}.
\end{example}

\begin{example}\label{cycletwotails} ``A cycle with two tails''.

\begin{picture}(300,120)
\put(60,40) {\circle*{4}} \put(60,80) {\circle*{4}} \put(95,100) {\circle*{4}}
\put(130,80) {\circle*{4}} \put(130,40) {\circle*{4}} \put(95,20) {\circle*{4}}

\multiput(170,80)(40,0){2} {\circle*{4}}  \multiput(225,80)(10,0){3} {\circle*{2}}
\multiput(170,40)(40,0){2} {\circle*{4}}  \multiput(225,40)(10,0){3} {\circle*{2}}
\multiput(132,80)(40,0){2} {\line(1,0){36}} \multiput(132,40)(40,0){2} {\line(1,0){36}}
\put(130,78) {\line(0,-1){36}} \put(95,20) {\line(5,3){35}} \put(60,80) {\line(5,3){35}}
\put(95,20) {\line(-5,3){35}} \put(95,100) {\line(5,-3){35}} \put(85,50) {$\mathbb{C}_{2n+1}$}
\multiput(60,70)(0,-10){3} {\circle*{2}}
\put(98,102) {$1$} \put(50,78) {$2$} \put(78,10) {$2n-1$} \put(26,36) {$2n-2$}
\put(124,88) {$2n+1$}  \put(164,88) {$2n+3$} \put(204,88) {$2n+5$}
\put(126,26) {$2n$}    \put(164,26) {$2n+2$} \put(204,26) {$2n+4$}
\end{picture}

Given the cycle $\bc_{2n+1}$ with vertices $1,2,\ldots,2n+1$, we attach two infinite paths to the
last two vertices. Denote this graph by $\gg=\bc_{2n+1}+\bp_\infty+\bp_\infty$.
We put $\wte_1=e_n$,
\begin{equation*}
\begin{split}
\wte_j &=\frac{e_{n+j-1}+e_{n-j+1}}{\sqrt2}\,, \quad j=2,3,\ldots,n, \\
\wte_{j} &=\frac{e_{2j-2}+e_{2j-1}}{\sqrt2}\,, \quad j\ge n+1.
\end{split}
\end{equation*}
The subspace $\widetilde{\cl}:=\lc\{\wte_j\}_{j\ge1}$ is $A(\gg)$-invariant and
\begin{equation*}
\begin{split}
\ag\wte_1=\sqrt2\,\wte_2, \ \ \ag\wte_2=\sqrt2\,\wte_1+\wte_3, \ \ \ag\wte_k=\wte_{k-1}+\wte_{k+1},
\ \ 3\le k\le n, \\
\ag\wte_{n+1}=\wte_n+\wte_{n+1}+\wte_{n+2}, \quad \ag\wte_k=\wte_{k-1}+\wte_{k+1}, \quad  k\ge n+2.
\end{split}
\end{equation*}
So $A(\gg)\vert\,\widetilde{\cl}=J(\{b_j\},\{a_j\})$ with
\begin{equation}\label{jaccomp2}
b_j=
\left\{
  \begin{array}{ll}
    1, & j=n+1, \\
    0, & j\not=n+1.
  \end{array}
\right.
\qquad
a_j=
\left\{
  \begin{array}{ll}
    \sqrt2, & j=1, \\
    1, & j\not=1.
  \end{array}
\right.
\end{equation}

Next, put
$$ h_j:=\frac{e_{n+j}-e_{n-j}}{\sqrt2}\,, \ \ 1\le j\le n-1,\quad
h_j:=\frac{e_{2j}-e_{2j+1}}{\sqrt2}\,,
\ \ j\ge n.
$$
It is clear that $\ch=\lc\{h_j\}_{j\ge1}$ is $A(\gg)$-invariant, $A(\gg)\vert\,\ch=J_0$, and
$\widetilde{\cl}\oplus\ch=\ell^2$. Finally we have the following canonical form
\begin{equation}\label{canfor11}
A(\gg)\simeq J(\{b_j\},\{a_j\})\bigoplus J_0,
\end{equation}
$J(\{b_j\},\{a_j\})$ is in \eqref{jaccomp2}.

The Jost polynomial is computed in \eqref{jf4}
$$ -\sqrt2u(z)=z^{2n+1}+z^2+z-1, $$
and, by the Descarte's rule, we see that $u$ has one real root $0<x_1(n)<1$. Hence,
the discrete spectrum  is
$$ \s_d(\gg)=x_1(n)+\frac1{x_1(n)}\,, $$
and the absolutely continuous spectrum has multiplicity $2$.
\end{example}

\medskip

Surprisingly enough, the case when $p\ge1$ infinite rays are attached to {\it each} vertex of a
finite graph $G$ with vertices $\{1,2,\ldots,n\}$ (so there are $pn$ such rays altogether) is easy
to handle, and the spectrum of such graph can be found explicitly in terms of the spectrum of $G$.
Denote such graph by $\gg=G+\bp_\infty(p)$. We label the vertices along the rays (off $G$) as
$$ R_i=\{m_j+i\}_{j=1}^\infty, \quad m_j:=n+(j-1)pn, \quad i=1,2,\ldots,pn. $$

\begin{theorem}\label{sungraph}
Given a finite graph $G$ with vertices $1,2,\ldots,n$ and the spectrum
$\s(G)=\{\l_j\}_{j=1}^n$, let $\gg=G+\bp_\infty(p)$, $p\in\bn$.
Then the adjacency operator $A(\gg)$ is unitarily equivalent to the orthogonal sum
\begin{equation}\label{canfor12}
\begin{split}
A(\gg) &\simeq \bigoplus_{j=1}^n J(\l_j,\sqrt{p})
\bigoplus\Bigl(\bigoplus_{i=1}^{(p-1)n}J_0\Bigr), \\
J(\l_j,\sqrt{p})&:=J(\{\l_j,0,0,\ldots\},\{\sqrt{p},1,1,\ldots\}).
\end{split}
\end{equation}
The spectrum of $\gg$ is
\begin{equation}\label{specgen}
\s(\gg)=[-2,2]\bigcup\bigcup_{j=1}^n \s_d(J(\l_j,\sqrt{p})).
\end{equation}
\end{theorem}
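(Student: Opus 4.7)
The plan is to construct explicitly a unitary that brings $\ag$ to the claimed block diagonal form by combining a ``Fourier in rays'' decomposition at each vertex of $G$ with the spectral decomposition of $A(G)$.

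First, for each vertex $k\in\{1,\ldots,n\}$ of $G$, group the $p$ rays attached to $k$ by an index set $I_k\subset\{1,\ldots,pn\}$, $|I_k|=p$, and introduce the symmetric ray combinations
$$
\whe^{(k)}_s:=\frac1{\sqrt{p}}\sum_{i\in I_k} e_{m_s+i},\qquad s\geq1.
$$
A direct use of \eqref{adjop} yields $\ag e_k=\sum_{l\sim k}e_l+\sqrt{p}\,\whe^{(k)}_1$ (with $l\sim k$ in $G$), $\ag\whe^{(k)}_1=\sqrt{p}\,e_k+\whe^{(k)}_2$, and $\ag\whe^{(k)}_s=\whe^{(k)}_{s-1}+\whe^{(k)}_{s+1}$ for $s\geq2$. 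Hence the subspace $\cl_{\text{sym}}:=\lc(\{e_k\}_{k=1}^n\cup\{\whe^{(k)}_s\}_{k,s})$ is $\ag$-invariant.

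Second, I would diagonalize the $G$-part inside $\cl_{\text{sym}}$. Let $\{f_j=(f_j^1,\ldots,f_j^n)\}_{j=1}^n$ be an orthonormal eigenbasis of $A(G)$ with eigenvalues $\l_j$, and set $F_j:=\sum_k f_j^k\,e_k$ and $F_j^{(s)}:=\sum_k f_j^k\,\whe^{(k)}_s$. Using $A(G)f_j=\l_jf_j$ one computes
$$
\ag F_j=\l_jF_j+\sqrt{p}\,F_j^{(1)},\ \ \ag F_j^{(1)}=\sqrt{p}\,F_j+F_j^{(2)},\ \ \ag F_j^{(s)}=F_j^{(s-1)}+F_j^{(s+1)}\ (s\geq2).
$$
Thus each $\cl_j:=\lc\{F_j,F_j^{(1)},F_j^{(2)},\ldots\}$ is $\ag$-invariant, the vectors $F_j,F_j^{(s)}$ form an orthonormal basis of it (by orthogonality of the $f_j$), and the matrix of $\ag|_{\cl_j}$ is exactly $J(\l_j,\sqrt{p})$.

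Third, to treat $\cl_{\text{sym}}^\perp$, for each $k$ fix an $s$-independent orthonormal basis $\{\xi^{(k,l)}\}_{l=1}^{p-1}$ of the orthogonal complement of the all-ones vector in $\bc^p$, and set $g^{(k,l)}_s:=\sum_{i\in I_k}\xi^{(k,l)}_i\,e_{m_s+i}$. The cancellation $\sum_i\xi^{(k,l)}_i=0$ decouples $g^{(k,l)}_1$ from $e_k$, so $\ag g^{(k,l)}_1=g^{(k,l)}_2$ and $\ag g^{(k,l)}_s=g^{(k,l)}_{s-1}+g^{(k,l)}_{s+1}$ for $s\geq2$; hence each $\cl_{k,l}:=\lc\{g^{(k,l)}_s\}_{s\geq1}$ is $\ag$-invariant with $\ag|_{\cl_{k,l}}\simeq J_0$, and there are $n(p-1)$ such subspaces. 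A routine dimension/orthogonality check confirms that $\{\cl_j\}_j\cup\{\cl_{k,l}\}_{k,l}$ exhausts $\ell^2$, giving \eqref{canfor12}. The spectrum \eqref{specgen} is then immediate from Theorem~DS, since every $J_0$-summand yields $[-2,2]$ and each $J(\l_j,\sqrt{p})$ is a finite-rank perturbation of $J_0$ contributing $[-2,2]\cup\s_d(J(\l_j,\sqrt{p}))$. The only real obstacle is bookkeeping: choosing the rotations $\xi^{(k,l)}$ $s$-independently so that the cancellation at $s=1$ and the three-term recursion at $s\geq2$ propagate uniformly along each ray.
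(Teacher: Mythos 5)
Your proposal is correct and follows essentially the same route as the paper: symmetric ray combinations at each vertex, a rotation by the orthonormal eigenbasis of $A(G)$ to produce the invariant subspaces carrying $J(\l_j,\sqrt{p})$, and the $(p-1)n$ copies of $J_0$ from the ray combinations orthogonal to the all-ones vector. The only difference is cosmetic — you spell out the complement via an explicit $s$-independent basis of $(1,\ldots,1)^\perp$ in $\bc^p$, a step the paper leaves as ``not hard to see.''
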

\begin{proof}
Put
$$ g_j(k):=\frac1{\sqrt{p}}\,\sum_{l=1}^p e_{m_j+(k-1)p+l}, \quad k=1,2,\ldots,n, \quad j\in\bn, $$
so $\{g_j(k)\}_{j,k}$ is the orthonormal sequence in $\ell^2$. It is easy to see from the way of labeling
of the vertices that the adjacency operator $\ag$ acts on these vectors as
\begin{equation}\label{actong}
\begin{split}
\ag g_j(k) &=g_{j-1}(k)+g_{j+1}(k), \quad j=2,3,\ldots  \\
\ag g_1(k) &=\sqrt{p}\,e_k+g_2(k).
\end{split}
\end{equation}
Next, by the definition
\begin{equation}
\ag\,e_k=\{A(G)e_k^{(n)},0,0,\ldots\}+\sqrt{p}\,g_1(k), \quad k=1,2,\ldots,n,
\end{equation}
where $\{e_k^{(n)}\}_{k=1}^n$ is the standard basis in $\bc^n$.

To construct the canonical basis we invoke the eigenvectors of $A(G)$
$$ f_k^{(n)}:=\sum_{q=1}^n \z_{qk}\,e_q^{(n)}\,, \qquad \z=\|\z_{qk}\|_{q,k=1}^n $$
is a unitary matrix, so $A(G)f_k^{(n)}=\l_k f_k^{(n)}$, $k=1,\ldots,n$. With such a notation at hand we put
\begin{equation*}
\begin{split}
\wte_0(k) &:=\sum_{q=1}^n \z_{qk}\,e_q=(f_k^{(n)},0,0\ldots), \\
\wte_j(k) &:=\sum_{q=1}^n \z_{qk}\,g_j(q)\,, \quad j\in\bn, \quad k=1,\ldots,n.
\end{split}
\end{equation*}
It is clear from \eqref{actong} that $\ag\,\wte_{j}(k)=\wte_{j-1}(k)+\wte_{j+1}(k)$, $j\ge2$,
$$ \ag\wte_1(k)=\sum_{q=1}^n \z_{qk}\,(\sqrt{p}\,e_q+g_2(q))=\sqrt{p}\,\wte_0(k)+\wte_2(k), $$
and
\begin{equation*}
\begin{split}
\ag\wte_0(k) &=\sum_{q=1}^n \z_{qk}\,\ag\,e_q=\Bigl(\sum_{q=1}^n \z_{qk}\,A(G)e_q^{(n)},0,0,\ldots\Bigr)
+\sqrt{p}\,\wte_1(k) \\
&=\l_k\wte_0(k)+\sqrt{p}\,\wte_1(k).
\end{split}
\end{equation*}
Hence the subspace $\cl:=\lc\{\wte_j(k)\}_{1\le k\le n, j=0,1\ldots}$ is $\ag$-invariant and
$$ \ag\vert\cl\simeq \bigoplus_{k=1}^n J(\l_k,\sqrt{p}). $$

It is not hard to see that on the orthogonal complement
$$ \ag\vert\ell^2\ominus\cl\simeq\bigoplus_{k=1}^{(p-1)n} J_0. $$
The proof is complete.
\end{proof}

In the case $p=1$ this result is proved in \cite{Niz14}.

To find the discrete spectrum of $J(\l_k,\sqrt{p})$ one has to solve the Jost equation \eqref{jf1}
\begin{equation}\label{quadr2}
(p-1)x^2+\l_k x-1=0,
\end{equation}
pick its spectral roots, that is, the roots in $(-1,1)$, and then take their Zhukovsky images.
The simplest case is $p=1$, when each $\l_k$ with $|\l_k|>1$ generates one eigenvalue $\l_k+\l_k^{-1}$
of $\ag$. For $p\ge3$ an elementary analysis of equation \eqref{quadr2} shows that there are two spectral
roots for $|\l_k|<\frac{p(p-2)}{p-1}$, and there is one such root otherwise.

\begin{example}
Consider the case $p=2$. Equation \eqref{quadr2} has the roots
$$ x_{\pm}=\frac{\l_k\pm\sqrt{\l_k^2+4}}2\,, \quad x_-<0<x_+. $$
It is easy to see that there is exactly one spectral root for $\l_k\not=0$, and no such roots for $\l_k=0$.
The discrete spectrum now looks as follows
$$ \s_d(\gg)=\bigcup_{\l_k\not=0} {\rm sign}\l_k\,\sqrt{\l_k^2+4}. $$

\end{example}

\medskip

We complete the section with two examples wherein two-sided Jacobi matrices arise naturally.

\begin{example} ``A double infinite star graph''.

Denote by $S_{k,\infty}$ the infinite star-like graph with $k\ge2$ infinite rays emanating from the common
root (cf. Example \ref{starmult}). The main object under consideration is the coupling of
two such graphs with the bridge connecting their roots, so $\gg=S_{p,\infty}+S_{q,\infty}$.

We label the vertices of the ``right'' star $S_{p,\infty}$ along the rays by positive integers as
$$ \{1,2,p+2,\ldots\}, \ \{1,3,p+3,\ldots\}, \ldots, \ \{1,p+1,2p+1,\ldots\}, $$
so the root is $1$. Similarly, we number the vertices of the ``left'' graph $S_{q,\infty}$ by negative
integers, so the root is $-1$ and the vertices along the rays are
$$ \{-1,-2,-q-2,\ldots\}, \ \{-1,-3,-q-3,\ldots\}, \ldots, \ \{-1,-q-1,-2q-1,\ldots\}. $$
The underlying $\ell^2$ space is $\ell^2(\bz_0)$, $\bz_0:=\bz\backslash\{0\}$, and the standard basis is
$\{e_j\}_{j\in\bz_0}$.

We construct an orthonormal system $\{\wte_j\}_{j\in\bz_0}$ by
\begin{equation*}
\begin{split}
\wte_1 &:=e_1, \qquad \ \ \wte_k:=\frac1{\sqrt{p}}\,\sum_{i=2}^{p+1} e_{(k-2)p+i}, \ \quad k\ge2; \\
\wte_{-1} &:=e_{-1}, \qquad \wte_j:=\frac1{\sqrt{q}}\,\sum_{i=2}^{q+1} e_{(j+2)q-i}, \quad j\le -2.
\end{split}
\end{equation*}
It is easy to see that $\ag$ acts as
\begin{equation*}
\begin{split}
\ag\wte_1 &=\wte_{-1}+\sqrt{p}\,\wte_2, \quad \qquad \qquad \ \ag\wte_{-1} =\sqrt{q}\,\wte_{-2}+\wte_1, \\
\ag\wte_2 &=\sqrt{p}\,\wte_{1}+\wte_3,  \qquad \qquad \qquad \ag\wte_{-2} =\wte_{-3}+\sqrt{q}\,\wte_{-1}, \\
\ag\wte_k &=\wte_{k-1}+\wte_{k+1}, \ \ k\ge3, \qquad \ag\wte_{-j} =\wte_{-j-1}+\wte_{-j+1}, \ \ j\ge3.
\end{split}
\end{equation*}
Hence the subspace $\widetilde\cl=\lc\{\wte_j\}_{j\in\bz_0}$ is $\ag$-invariant and
\begin{equation}
\ag\vert\widetilde\cl=J(p,q)=J(\{0\},\{a_j\}_{j\in\bz}), \quad
a_j=\left\{
      \begin{array}{ll}
        \sqrt{p}, & j=1; \\
        \sqrt{q}, & j=-1; \\
        1, & j\not=\pm1.
      \end{array}
    \right.
\end{equation}

To supplement the system $\{\wte_j\}_{j\in\bz_0}$ to the orthonormal basis we proceed in a standard way
\begin{equation*}
\begin{split}
\wte_k^{(l)} &:=\sum_{i=2}^{p+1} \xi_{i-1,l}\,e_{(k-1)p+i}, \quad l=1,\ldots,p, \quad k=1,2,\ldots, \qquad \ \
\|\xi_{rs}\|=\cf_p; \\
\wte_j^{(l)} &:=\sum_{i=2}^{q+1} \eta_{i-1,l}\,e_{(j+1)q-i}, \quad l=1,\ldots,q, \ \ j=-1,-2,\ldots, \ \
\|\eta_{rs}\|=\cf_q.
\end{split}
\end{equation*}
Finally,
\begin{equation}
\ag\simeq J(p,q)\bigoplus\Bigl(\bigoplus_{i=1}^{p+q-2} J_0\Bigr).
\end{equation}

To find the discrete spectrum we apply Example \ref{twojm2} with $a_{-1}=\sqrt{q}$, $a_1=\sqrt{p}$, so
characteristic equation \eqref{quadr} looks
$$ Q(y)=(p-1)(q-1)y^2-(p+q-1)y+1=0. $$
It is easy to see that this equation always has tow positive roots $0<y_-<y_+$.
In the cases $\min(p,q)=2$ or $p=q=3$ we have $y_+\ge1$, so only the first root has the spectral meaning.
Otherwise, $0<y_-<y_+<1$, so both of them are the spectral roots. Hence,
$$ \s_d(J(p,q))=\pm\Bigl(\sqrt{y_-}+\frac1{\sqrt{y_-}}\Bigr), \qquad \min(p,q)=2 \ \ {\rm or} \ \ p=q=3, $$
or
$$ \s_d(J(p,q))=\left\{\pm\Bigl(\sqrt{y_-}+\frac1{\sqrt{y_-}}\Bigr),
\pm\Bigl(\sqrt{y_+}+\frac1{\sqrt{y_+}}\Bigr)\right\} $$
for the rest of the values $p,q\ge2$. Here
$$ y_{\pm}=\frac{p+q-1\pm\sqrt{(p-q)^2+2(p+q)-3}}{2(p-1)(q-1)}\,. $$

Note that if the bridge has weight $d$, the above argument ends up with the Jacobi matrix
$$ J(p,q,d)=J(\{0\},\{a_j\}_{j\in\bz}), \quad
a_j=\left\{
      \begin{array}{ll}
        \sqrt{p}, & j=1; \\
        \sqrt{q}, & j=-1; \\
        d, & j=0,
      \end{array}
    \right. \quad a_j=1, \ \ j\not=0,\pm1,
$$
for which the spectrum can be found explicitly (see Introduction).

The example was studied in \cite{Niz14} by using a different model based on one-sided Jacobi matrices
\eqref{defjac}, with no explicit expressions provided.
\end{example}

\begin{example} ``Infinite regular trees''.

The Bethe--Cayley trees $\bb_d$ of degree $d\in\bn$ (or Bethe lattices in physical literature) are among the
most notable infinite trees. For the introduction to the spectral theory on such trees see
\cite[Chapter 10]{SiSz}. We consider here the coupling of two copies of $\bb_d$ with the bridge connecting
their roots, $\gg=\bb_d+\bb_d$. Note that $\gg$ is an infinite regular tree of degree $d+1$. We number the
vertices of $\gg$ as in the previous example (the vertices of the ``right'' copy by positive integers,
and of the ``left'' copy by negative integers). So, the vertices of $k$'s generation in the right copy
(the root forms the 1st generation) are labeled with
$$ \{l_{k-1}+1,l_{k-1}+2,\ldots,l_k\}, \qquad l_k:=\sum_{j=1}^{k-1} d^j=\frac{d^{k}-1}{d-1}\,,
\quad k\in\bn. $$
Again, the underlying Hilbert space is $\ell^2(\bz_0)$ with the basis $\{e_j\}_{j\in\bz_0}$.

Put $\wte_{\pm1}:=e_{\pm1}$,
$$ \wte_k:=\frac1{\sqrt{d^{k-1}}}\,\sum_{i=l_{k-1}+1}^{l_k} e_i, \qquad
\wte_{-k}:=\frac1{\sqrt{d^{k-1}}}\,\sum_{j=l_{k-1}+1}^{l_k} e_{-j}, \quad  k=2,3,\ldots. $$
The operator $\ag$ acts as
\begin{equation*}
\begin{split}
\ag\,\wte_1 &=\whe_{-1}+\sqrt{d}\,\wte_2, \qquad \ag\,\wte_{-1} =\sqrt{d}\,\whe_{-2}+\wte_1, \\
\ag\,\wte_k &=\sqrt{d}\,\wte_{k-1}+\sqrt{d}\,\wte_{k+1}, \qquad |k|\ge2.
\end{split}
\end{equation*}
So the subspace $\widetilde\cl=\lc\{\wte_j\}_{j\in\bz_0}$ is $\ag$-invariant and
\begin{equation}\label{4.9}
\ag\vert\widetilde\cl=J(\gg)=\sqrt{d}\,J(\{0\},\{a_j\}_{j\in\bz}), \quad
a_j=\left\{
      \begin{array}{ll}
        1/\sqrt{d}, & j=0; \\
        1, & j\not=0
      \end{array}
    \right.
\end{equation}

The structure of operator $\ag$ on the orthogonal complement is the same as in the case of $\bb_d$ (see
\cite{Br07}, \cite[Theorem 10.2.2]{SiSz}), so finally
\begin{equation}
\ag\simeq J(\gg)\bigoplus\Bigl(\bigoplus_{i=1}^{\omega_d} \sqrt{d}\,J_0\Bigr), \qquad
\omega_d=
\left\{
  \begin{array}{ll}
    \infty, & d\ge2; \\
    0, & d=1.
  \end{array}
\right.
\end{equation}

To find the discrete spectrum of $\ag$ note that $J(\gg)$ \eqref{4.9} is the subject of Example \ref{twojm1}.
It follows that the discrete spectrum of $J(\gg)$ is empty, and so
\begin{equation}\label{spbethe}
\s(\gg)=[-2\sqrt{d}, 2\sqrt{d}]
\end{equation}
and is pure absolutely continuous of infinite multiplicity.
\end{example}


\begin{remark}
Since $\bb_1=\bp_\infty$, the question arises naturally whether it is possible to find spectra of the couplings
$\gg=G+\bb_d$. The above algorithm works equally well in this situation and leads to the canonical form
$$ A(G+\bb_d)\simeq A(G+\bp_\infty)\bigoplus\Bigl(\bigoplus_{i=1}^{\omega_d} \sqrt{d}\,J_0\Bigr). $$
\end{remark}

\end{document}